\numberwithin{equation}{section}
\begin{document}
\theoremstyle{plain}
\newtheorem{thm}{Theorem}[section]
\newtheorem{theorem}[thm]{Theorem}
\newtheorem{lemma}[thm]{Lemma}
\newtheorem{corollary}[thm]{Corollary}
\newtheorem{proposition}[thm]{Proposition}
\newtheorem{conjecture}[thm]{Conjecture}
\theoremstyle{definition}
\newtheorem{remark}[thm]{Remark}
\newtheorem{remarks}[thm]{Remarks}
\newtheorem{definition}[thm]{Definition}
\newtheorem{example}[thm]{Example}

\newcommand{\caA}{{\mathcal A}}
\newcommand{\caB}{{\mathcal B}}
\newcommand{\caC}{{\mathcal C}}
\newcommand{\caD}{{\mathcal D}}
\newcommand{\caE}{{\mathcal E}}
\newcommand{\caF}{{\mathcal F}}
\newcommand{\caG}{{\mathcal G}}
\newcommand{\caH}{{\mathcal H}}
\newcommand{\caI}{{\mathcal I}}
\newcommand{\caJ}{{\mathcal J}}
\newcommand{\caK}{{\mathcal K}}
\newcommand{\caL}{{\mathcal L}}
\newcommand{\caM}{{\mathcal M}}
\newcommand{\caN}{{\mathcal N}}
\newcommand{\caO}{{\mathcal O}}
\newcommand{\caP}{{\mathcal P}}
\newcommand{\caQ}{{\mathcal Q}}
\newcommand{\caR}{{\mathcal R}}
\newcommand{\caS}{{\mathcal S}}
\newcommand{\caT}{{\mathcal T}}
\newcommand{\caU}{{\mathcal U}}
\newcommand{\caV}{{\mathcal V}}
\newcommand{\caW}{{\mathcal W}}
\newcommand{\caX}{{\mathcal X}}
\newcommand{\caY}{{\mathcal Y}}
\newcommand{\caZ}{{\mathcal Z}}
\newcommand{\fA}{{\mathfrak A}}
\newcommand{\fB}{{\mathfrak B}}
\newcommand{\fC}{{\mathfrak C}}
\newcommand{\fD}{{\mathfrak D}}
\newcommand{\fE}{{\mathfrak E}}
\newcommand{\fF}{{\mathfrak F}}
\newcommand{\fG}{{\mathfrak G}}
\newcommand{\fH}{{\mathfrak H}}
\newcommand{\fI}{{\mathfrak I}}
\newcommand{\fJ}{{\mathfrak J}}
\newcommand{\fK}{{\mathfrak K}}
\newcommand{\fL}{{\mathfrak L}}
\newcommand{\fM}{{\mathfrak M}}
\newcommand{\fN}{{\mathfrak N}}
\newcommand{\fO}{{\mathfrak O}}
\newcommand{\fP}{{\mathfrak P}}
\newcommand{\fQ}{{\mathfrak Q}}
\newcommand{\fR}{{\mathfrak R}}
\newcommand{\fS}{{\mathfrak S}}
\newcommand{\fT}{{\mathfrak T}}
\newcommand{\fU}{{\mathfrak U}}
\newcommand{\fV}{{\mathfrak V}}
\newcommand{\fW}{{\mathfrak W}}
\newcommand{\fX}{{\mathfrak X}}
\newcommand{\fY}{{\mathfrak Y}}
\newcommand{\fZ}{{\mathfrak Z}}

\newcommand{\bA}{{\mathbb A}}
\newcommand{\bB}{{\mathbb B}}
\newcommand{\bC}{{\mathbb C}}
\newcommand{\bD}{{\mathbb D}}
\newcommand{\bE}{{\mathbb E}}
\newcommand{\bF}{{\mathbb F}}
\newcommand{\bG}{{\mathbb G}}
\newcommand{\bH}{{\mathbb H}}
\newcommand{\bI}{{\mathbb I}}
\newcommand{\bJ}{{\mathbb J}}
\newcommand{\bK}{{\mathbb K}}
\newcommand{\bL}{{\mathbb L}}
\newcommand{\bM}{{\mathbb M}}
\newcommand{\bN}{{\mathbb N}}
\newcommand{\bO}{{\mathbb O}}
\newcommand{\bP}{{\mathbb P}}
\newcommand{\bQ}{{\mathbb Q}}
\newcommand{\bR}{{\mathbb R}}
\newcommand{\bT}{{\mathbb T}}
\newcommand{\bU}{{\mathbb U}}
\newcommand{\bV}{{\mathbb V}}
\newcommand{\bW}{{\mathbb W}}
\newcommand{\bX}{{\mathbb X}}
\newcommand{\bY}{{\mathbb Y}}
\newcommand{\bZ}{{\mathbb Z}}
\newcommand{\id}{{\rm id}}

\title[Strong integrality for full colored HOMFLYPT invariants]{A simple proof of the strong integrality for full colored HOMFLYPT
invariants}
\author[Shengmao Zhu]{Shengmao Zhu}
\address{Center of Mathematical Sciences \\Zhejiang University \\Hangzhou, 310027, China }
\email{zhushengmao@gmail.com}

\begin{abstract}
By using the HOMFLY skein theory. We prove a strong integrality
theorem for the reduced colored HOMFLYPT invariants defined by a
basis in the full HOMFLY skein of the annulus.
\end{abstract}

\maketitle



\section{Introduction}
Let $(\mathcal{L}_+,\mathcal{L}_-,\mathcal{L}_0)$ be the standard
notation for the Conway triple of link diagrams.  The reduced Jones
polynomial $J(\mathcal{L};q)$ of a link (diagram) $\mathcal{L}$
 can be determined by the following skein relation
\begin{align}
q^2J(\mathcal{L}_+;q)-q^{-2}J(\mathcal{L}_-;q)=(q-q^{-1})J(\mathcal{L}_0;q),
\end{align}
and the normalization condition $J(U;q)=1$, where $U$ denotes the
trivial knot, i.e. the unknot throughout this paper. By induction on
the number of the components of the link $\mathcal{L}$. It is easy
to obtain, for an orient link $\mathcal{L}$ with $L$ components,
$J(\mathcal{L};q)\in \mathbb{Z}[q^{\pm 2}]$ \ for  $L$  odd, and
$qJ(\mathcal{L};q)\in \mathbb{Z}[q^{\pm 2}]$\ if $L$ even.
 In particular, for
a knot $\mathcal{K}$ (i.e. a link with only one component), we have
$ J(\mathcal{K};q)\in \mathbb{Z}[q^{\pm 2}].$ The Jones polynomial
$J(\mathcal{L};q)$ can be viewed as the $U_{q}(sl_2)$ quantum group
invariant with the fundamental representations \cite{RT}. More
generally, for a simple complex Lie algebra $\mathfrak{g}$,  T. Le
\cite{Le} showed that,  with a suitable normalization,  the quantum
group $U_{q}(\mathfrak{g})$ invariants with any irreducible
representations lie in the ring $\mathbb{Z}[q^{\pm 2}]$ which is
called the strong integrality (see Theorem 2.2 in \cite{Le}).

In this paper, we formulate a similar strong integrality theorem for
the reduced colored HOMFLYPT invariant defined by a basis in the
full HOMFLY skein of the annulus. We use the results from the HOMFLY
skein theory due to  Morton, Aiston, Lukac etc \cite{Ai,AM,Morton,L}
instead of using the quantum group theory by \cite{Lu,An}. So our
proof is completely different from Le's \cite{Le}.

Let $\mathcal{H}(\mathcal{L};q,a)$ be the framed HOMFLYPT invariant
of $\mathcal{L}$ determined by the local relations as showed in
Figure 1. Let $\mathcal{C}$ be the full HOMFLY skein of the annulus,
we will recall the definition in Section 2. In \cite{HM}, Hadji and
H. R. Morton constructed the basis elements $Q_{\lambda,\mu}$ in the
full skein $\mathcal{C}$. For a link $\mathcal{L}$ with $L$
components, we choose $L$ basis elements
$Q_{\lambda^\alpha,\mu^\alpha}$, $\alpha=1,...,L$ in the skein
$\mathcal{C}$. We construct the satellite link
$\mathcal{L}\star\otimes_{\alpha=1}^L
Q_{\lambda^\alpha,\mu^\alpha}$. Then the full colored HOMFLYPT
invariant for a link $\mathcal{L}$ is given by
\begin{align}
&W_{[\lambda^1,\mu^1],[\lambda^2,\mu^2],..,[\lambda^L,\mu^L]}(\mathcal{L};q,a)\\\nonumber
&=q^{-\sum_{\alpha=1}^L(\kappa_{\lambda^\alpha}+\kappa_{\mu^\alpha})w(\mathcal{K}_\alpha)}
a^{-\sum_{\alpha=1}^L(|\lambda^\alpha|+|\mu^\alpha|)w(\mathcal{K}_{\alpha})}
\mathcal{H}(\mathcal{L}\star\otimes_{\alpha=1}^L
Q_{\lambda^\alpha,\mu^\alpha};q,a).
\end{align}
In particular, when all $\mu^\alpha=\emptyset$ for $\alpha=1,..,L$,
$W_{[\lambda^1,\emptyset],[\lambda^2,\emptyset],..,[\lambda^L,\emptyset]}(\mathcal{L};q,a)$
is the ordinary colored HOMFLYPT invariant
$W_{\lambda^1,\lambda^2,..,\lambda^L}(\mathcal{L};q,a)$ studied in
\cite{Zhu,CLPZ}. We refer to Section 2 and 3 for a review of the
HOMFLY skein theory and the definition of the full colored HOMFLYPT
invariant for an oriented link. By the definition,
$W_{[\lambda^1,\mu^1],[\lambda^2,\mu^2],..,[\lambda^L,\mu^L]}(\mathcal{L};q,a)$
is not a polynomial of $q^{\pm 1}, a^{\pm 1}$ in general, because of
the factors $(q^k-q^{-k})$ in the denominator. In order to kill
these factors, we introduce the notion of reduced full colored
HOMFLYPT invariants.

For the case of a knot $\mathcal{K}$, it is natural to define the
reduced full colored HOMFLYPT invariant of $\mathcal{K}$ by
\begin{align}
P_{[\lambda,\mu]}(\mathcal{K};q,a)=\frac{W_{[\lambda,\mu]}(\mathcal{K};q,a)}{W_{[\lambda,\mu]}(U;q,a)}.
\end{align}
Then, we have
\begin{theorem}
For any knot $\mathcal{K}$, $P_{[\lambda,\mu]}(\mathcal{K};q,a)\in
\mathbb{Z}[q^{\pm 2},a^{\pm 2}]$.
\end{theorem}
In fact, Theorem 1.1 is a special case of the following Theorem 1.2.
Theorem 1.1 can also be proved directly by using the integrality
theorem \cite{Morton}(i.e Theorem 1 in \cite{Morton}) and the
Theorem 1.3 in the following for the case of knot.

It seems natural to generalize the definition of the reduced full
colored HOMFLYPT invariants for a general link $\mathcal{L}$ with
$L$ components as follow:
\begin{align}
P_{[\lambda^1,\mu^1],..,[\lambda^L,\mu^L]}(\mathcal{L};q,a)=
\frac{W_{[\lambda^1,\mu^1],..,[\lambda^L,\mu^L]}(\mathcal{L};q,a)}{\prod_{\alpha=1}^LW_{[\lambda^\alpha,\mu^\alpha]}(U;q,a)}.
\end{align}
Unfortunately,
$P_{[\lambda^1,\mu^1],..,[\lambda^L,\mu^L]}(\mathcal{L};q,a)$ does
not lie in the ring $\mathbb{Z}[q^{\pm 1},a^{\pm 1}]$ in general.
The recent works  by S. Nawata et al. \cite{NRZ,GNSSS} motivate us
to define the following  $[\lambda^\alpha,\mu^\alpha]$-reduced
HOMFLYPT invariants for a link $\mathcal{L}$:
\begin{align}
Q_{[\lambda^1,\mu^1],...,[\lambda^L,\mu^L]}^{[\mathcal{\lambda}^\alpha,\mathcal{\mu}^\alpha]}(\mathcal{L};q,a)
=\frac{W_{[\lambda^1,\mu_1],...,[\lambda^L,\mu^L]}(\mathcal{L};q,a)}
{W_{[\lambda^\alpha,\mu^\alpha]}(U;q,a)} \ \ \text{for}\ \
\alpha=1,...,L.
\end{align}
However,
$Q_{[\lambda^1,\mu^1],...,[\lambda^L,\mu^L]}^{[\mathcal{\lambda}^\alpha,\mathcal{\mu}^\alpha]}(\mathcal{L};q,a)$
still contains the factors of the forms $(q^k-q^{-k})$ in
denominator. In order to kill these factors, we add a factor to
$Q_{[\lambda^1,\mu^1],...,[\lambda^L,\mu^L]}^{[\mathcal{\lambda}^\alpha,\mathcal{\mu}^\alpha]}(\mathcal{L};q,a)$,
and introduce the notation of the normalized
$[\lambda^\alpha,\mu^\alpha]$-reduced invariant as
\begin{align}
P_{[\lambda^1,\mu^1],...,[\lambda^L,\mu^L]}^{[\lambda^\alpha,\mu^\alpha]}(\mathcal{L};q,a)&=\prod_{\beta=1,\beta\neq
\alpha}^{L}\prod_{\substack{\rho^\beta,\nu^\beta\\|\lambda^\beta|-|\rho^\beta|=
|\mu^\beta|-|\nu^\beta|}}\prod_{x\in \rho^\beta,\nu^\beta}(a\cdot
q^{cn(x)})(q^{hl(x)}-q^{-hl(x)})\\\nonumber &\cdot
Q_{[\lambda^1,\mu^1],...,[\lambda^L,\mu^L]}^{[\mathcal{\lambda}^\alpha,\mathcal{\mu}^\alpha]}(\mathcal{L};q,a).
\end{align}
See Section 3.1 for the definitions of $cn(x), hl(x)$ in formula
(1.6).

The main goal of this paper is to prove the following:
\begin{theorem}
For any $\alpha=1,...,L$, we have
\begin{align}
P_{[\lambda^1,\mu^1],...,[\lambda^L,\mu^L]}^{[\lambda^\alpha,\mu^\alpha]}(\mathcal{L};q,a)\in
\mathbb{Z}[q^{\pm 2}, a^{\pm 2}].
\end{align}
\end{theorem}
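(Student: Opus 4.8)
The plan is to reduce the general-link statement to the already-available toolkit of HOMFLY skein theory: the integrality theorem of Morton (Theorem~1 of \cite{Morton}) together with the structure of the basis elements $Q_{\lambda,\mu}$ in the full skein $\caC$ constructed in \cite{HM}. First I would recall that $Q_{\lambda,\mu}$ admits an expansion, in the ordinary (positive) sub-skein $\caC^+$, as an integral linear combination of products $Q_{\rho}\cdot \bar Q_{\nu}$ of the Turaev basis elements, with the constraint $|\lambda|-|\rho|=|\mu|-|\nu|$ governing which terms appear; substituting this into the satellite $\mathcal{L}\star\otimes_\alpha Q_{\lambda^\alpha,\mu^\alpha}$ and using multilinearity of $\mathcal H(\cdot;q,a)$ expresses $W_{[\lambda^1,\mu^1],\dots,[\lambda^L,\mu^L]}(\mathcal L;q,a)$ as a finite sum of ordinary colored HOMFLYPT invariants $W_{\rho^1,\nu^1,\dots}(\mathcal L;q,a)$ decorated by framing corrections. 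The point of the prefactor in (1.6) is exactly that the denominator $W_{[\lambda^\alpha,\mu^\alpha]}(U;q,a)$ in the $\alpha$-th slot, being the product over cells of $(aq^{cn(x)})(q^{hl(x)}-q^{-hl(x)})$-type hook-length factors, is cancelled by the corresponding factor of $W_{[\lambda^\alpha,\mu^\alpha]}(U)$ appearing in each summand, while the extra product over $\beta\neq\alpha$ in (1.6) is precisely the least common denominator contributed by the other slots.

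The key steps, in order, are: (i) write down the Hadji--Morton expansion of each $Q_{\lambda^\alpha,\mu^\alpha}$ and record the integrality of its coefficients and the $|\lambda|-|\rho|=|\mu|-|\nu|$ constraint; (ii) plug this into the definition (1.2) of $W$ and into (1.5)--(1.6), collecting framing factors $q^{\kappa}a^{|\cdot|}$ and checking they contribute only units in $\bZ[q^{\pm1},a^{\pm1}]$ (so the passage $q^{\pm1}\to q^{\pm2}$, $a^{\pm1}\to a^{\pm2}$ is consistent); (iii) invoke the knot-case normalization, i.e. that $W_{[\lambda^\alpha,\mu^\alpha]}(U;q,a)$ equals (up to a unit) the explicit hook-content product, so that each summand of $P^{[\lambda^\alpha,\mu^\alpha]}_{\dots}(\mathcal L)$ becomes a product of the $\beta\neq\alpha$ hook factors from (1.6) times $\mathcal H(\mathcal L\star\otimes Q_{\rho^\beta}\bar Q_{\nu^\beta};q,a)$ divided by $W_{[\lambda^\alpha,\mu^\alpha]}(U)$; (iv) apply Morton's integrality theorem to the link $\mathcal L$ satellited by products of Turaev generators to conclude each such ratio, after multiplication by the $\beta\neq\alpha$ prefactor, lies in $\bZ[q^{\pm2},a^{\pm2}]$; (v) sum over the finitely many terms.

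The main obstacle I expect is step (iii)--(iv): controlling precisely which $(q^k-q^{-k})$ factors survive in the denominator after the $Q$-expansion, and verifying that the explicit prefactor in (1.6) — a product over $\beta\neq\alpha$ and over cells of $\rho^\beta,\nu^\beta$ subject to $|\lambda^\beta|-|\rho^\beta|=|\mu^\beta|-|\nu^\beta|$ — is exactly the right multiplier, neither too small (failing to clear denominators) nor introducing parity obstructions that would push us out of $\bZ[q^{\pm2},a^{\pm2}]$ into $\bZ[q^{\pm1},a^{\pm1}]$. Concretely, one must match the hook-length data $hl(x)$ appearing in (1.6) with the denominators produced by Morton's theorem applied to each Turaev-colored summand; this is where the combinatorics of skew Schur-type decompositions of $Q_{\lambda,\mu}$ meets the denominator bounds, and where the bulk of the bookkeeping lies. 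The parity statement ($q^{\pm2}$ rather than $q^{\pm1}$) should follow, as in the Jones-polynomial discussion in the introduction, from a component-count/writhe argument once the skein-theoretic expansion is in place, but it needs to be tracked carefully through the framing factors.
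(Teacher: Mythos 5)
Your overall architecture is the right one --- expand the Hadji--Morton elements, invoke Morton-type integrality, and design the prefactor in (1.6) to clear denominators --- but as written the proposal has two genuine gaps, both at the places you yourself flag as obstacles.

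First, Morton's Theorem 1 is a statement about 1-tangles (knots decorated by $Q_{\lambda,\mu}$); it does not directly apply to ``the link $\mathcal{L}$ satellited by products of Turaev generators.'' What is actually needed is that $W_{[\lambda^\alpha,\mu^\alpha]}(\mathcal{L};q,a)$ is divisible by $W_{[\lambda^\alpha,\mu^\alpha]}(U;q,a)$ when only the $\alpha$-th component is decorated by $Q_{\lambda^\alpha,\mu^\alpha}$ and the remaining components carry arbitrary fixed decorations. This requires cutting $\mathcal{K}_\alpha$ open to obtain a 1-tangle whose induced map $T^{\mathcal{L}}_{\mathcal{K}_\alpha}:\mathcal{C}\to\mathcal{C}$ commutes with the meridian map and is therefore diagonal in the $Q_{\lambda,\mu}$ basis with integral eigenvalues (the paper's Proposition 5.3). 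Without that eigenvector step, your claim that the denominator $W_{[\lambda^\alpha,\mu^\alpha]}(U;q,a)$ ``is cancelled by the corresponding factor in each summand'' is unsupported. Relatedly, stopping the expansion at products $Q_\rho Q_\nu^*$ does not control the denominators, since $\mathcal{H}(\mathcal{L}\star\otimes Q_{\rho^\beta}Q_{\nu^\beta}^*;q,a)$ is itself not a Laurent polynomial in general. The paper pushes the expansion all the way down to the power sums $X_{\tau}X_{\delta}^*$ (closures of single braids), so that the only denominators are the explicit factors $\{\tau^\beta\}\{\delta^\beta\}$, and then clears them by comparing the two unknot formulas (5.14)--(5.15); that comparison is precisely what matches the hook lengths in (1.6) to the denominators, and it is absent from your outline.

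Second, the passage from $\mathbb{Z}[q^{\pm 1},a^{\pm 1}]$ to $\mathbb{Z}[q^{\pm 2},a^{\pm 2}]$ is not obtained by a component-count/writhe argument. In the paper it follows from the symmetry theorem (Theorem 1.3/4.2): $W(\mathcal{L};-q,a)$ and $W(\mathcal{L};q,-a)$ each equal $(-1)^{\sum_\alpha(|\lambda^\alpha|+|\mu^\alpha|)}W(\mathcal{L};q,a)$, combined with the parity identity $(-1)^{|\lambda|}=(-1)^{\sum_{x\in\lambda}(hl(x)+cn(x))}$ to account for the prefactor, so that $P^{[\lambda^\alpha,\mu^\alpha]}_{[\lambda^1,\mu^1],\dots,[\lambda^L,\mu^L]}$ is separately invariant under $q\to -q$ and $a\to -a$; a Laurent polynomial with both invariances lies in $\mathbb{Z}[q^{\pm 2},a^{\pm 2}]$. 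Establishing those symmetries is itself a substantial part of the paper (Lemma 4.1 together with the character identities $\chi_{\rho^t}(\tau)=(-1)^{|\rho|-l(\tau)}\chi_{\rho}(\tau)$ and $c^{\lambda}_{\sigma,\rho}=c^{\lambda^t}_{\sigma^t,\rho^t}$), and your proposal does not supply a substitute for it.
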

We give some special cases of Theorem 1.2.  For example, when all
the partitions $\mu^\beta= \emptyset$, we let
\begin{align}
P_{\lambda^1,...,\lambda^L}^{\lambda^\alpha}(\mathcal{L};q,a):&=
P_{[\lambda^1,\emptyset],...,[\lambda^L,\emptyset]}^{[\lambda^\alpha,\emptyset]}(\mathcal{L};q,a)\\\nonumber
&=\prod_{\beta=1,\beta\neq \alpha}^{L}\prod_{x\in
\lambda^\beta}(a\cdot
q^{cn(x)})(q^{hl(x)}-q^{-hl(x)})\frac{W_{\lambda^1,...,\lambda^L}(\mathcal{L};q,a)}
{W_{\lambda^\alpha}(U;q,a)}.
\end{align}
Moreover, if we take $\lambda^1=\lambda^2=\cdots =\lambda
^L=(r^\rho)$, by some straight calculations, we have
\begin{align}
\prod_{x\in
(r^\rho)}(q^{hl(x)}-q^{-hl(x)})=\prod_{i=0}^{\rho-1}\frac{\{r+i\}!}{\{i\}!},
\ \prod_{x\in (r^\rho)}(a\cdot
q^{cn(x)})=a^{r\rho}q^{\frac{1}{2}\rho r(r-\rho)},
\end{align}
where $\{n\}!=\prod_{i=1}^n\{i\}$, and $\{i\}=(q^i-q^{-i})$.

Hence,
\begin{align}
P_{(r^\rho),...,(r^\rho)}^{(r^\rho)}(\mathcal{L};q,a)=\left(a^{r\rho}q^{\frac{1}{2}\rho
r(r-\rho)}\prod_{i=0}^{\rho-1}\frac{\{r+i\}!}{\{i\}!}\right)^{L-1}\frac{W_{(r^\rho),...,(r^\rho)}(\mathcal{L};q,a)}
{W_{(r^\rho)}(U;q,a)}.
\end{align}
Theorem 1.2 implies
$P_{(r^\rho),...,(r^\rho)}^{(r^\rho)}(\mathcal{L};q,a)\in
\mathbb{Z}[q^{\pm 2},a^{\pm 2}]$. This is just a statement in
\cite{GNSSS}(see the last paragraph in page 42 in \cite{GNSSS}. In
fact, the definition of $P_{(r^\rho)}^{\text{fin}}(\mathcal{L};a,q)$
given by formula (3.2) in \cite{GNSSS} is equal to
 $(-1)^{r\rho
 (L-1)}P_{(r^\rho),...,(r^\rho)}^{(r^\rho)}(\mathcal{L};q,a)$ ).
According to the work \cite{GNSSS},
$P_{(r^\rho),...,(r^\rho)}^{(r^\rho)}(\mathcal{L};q,a)\in
\mathbb{Z}[q^{\pm 2},a^{\pm 2}]$ suggests that
$P_{(r^\rho),...,(r^\rho)}^{(r^\rho)}(\mathcal{L};q,a)$ may have a
categorification whose homology is finite dimensional. Similarly,
Theorem 1.2 provides us a fact that the categorification of the
general invariant
$P_{[\lambda^1,\mu^1],...,[\lambda^L,\mu^L]}^{[\lambda^\alpha,\mu^\alpha]}(\mathcal{L};q,a)$
has a finite dimensional homology.

In order to prove Theorem 1.2, firstly, we give a simple proof of
the following symmetries for the full colored HOMFLYPT invariants.
\begin{theorem}
For any link $\mathcal{L}$ with $L$-components, we have
\begin{align}
W_{[\lambda^1,\mu^1],..,[\lambda^L,\mu^L]}(\mathcal{L};q^{-1},a)&=(-1)^{\sum_{\alpha=1}^L(|\lambda^\alpha|+|\mu^\alpha|)}
W_{[(\lambda^1)^t,(\mu^1)^t],..,[(\lambda^L)^t,(\mu^L)^t]}(\mathcal{L};q,a),\\
W_{[\lambda^1,\mu^1],..,[\lambda^L,\mu^L]}(\mathcal{L};-q,a)&=(-1)^{\sum_{\alpha=1}^L(|\lambda^\alpha|+|\mu^\alpha|)}
W_{[\lambda^1,\mu^1],..,[\lambda^L,\mu^L]}(\mathcal{L};q,a),
\\
W_{[\lambda^1,\mu^1],..,[\lambda^L,\mu^L]}(\mathcal{L};q,-a)&=(-1)^{\sum_{\alpha=1}^L(|\lambda^\alpha|+|\mu^\alpha|)}
W_{[\lambda^1,\mu^1],..,[\lambda^L,\mu^L]}(\mathcal{L};q,a).
\end{align}
\end{theorem}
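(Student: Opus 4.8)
The plan is to reduce the three identities to symmetries of the underlying framed invariant $\mathcal{H}$ together with the transposition behaviour of the Hadji--Morton elements $Q_{\lambda,\mu}$, and then to combine them. First I would record how the normalization factor in the definition of $W$ transforms: setting $E_q=\sum_{\alpha}(\kappa_{\lambda^\alpha}+\kappa_{\mu^\alpha})w(\mathcal{K}_\alpha)$ and $E_a=\sum_{\alpha}(|\lambda^\alpha|+|\mu^\alpha|)w(\mathcal{K}_\alpha)$, the factor $q^{-E_q}a^{-E_a}$ is unchanged under $q\mapsto q^{-1}$ composed with $\lambda^\alpha\mapsto(\lambda^\alpha)^t$, $\mu^\alpha\mapsto(\mu^\alpha)^t$ (because $\kappa_{\lambda^t}=-\kappa_\lambda$ and $|\lambda^t|=|\lambda|$), it is unchanged under $q\mapsto-q$ (because every $\kappa_\lambda$ is even), and it is multiplied by $(-1)^{E_a}$ under $a\mapsto-a$. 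Since $W$ does not depend on the framing, I may compute it from any chosen diagram of $\mathcal{L}$.

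The first skein-theoretic input is that the rule ``replace each crossing $\sigma$ by $-\sigma$'' is compatible with the local relations of Figure~1 after the substitution $q\mapsto q^{-1}$, $a\mapsto-a$, and again after $q\mapsto-q$, $a\mapsto-a$; this produces two automorphisms $\mathcal{A}_1,\mathcal{A}_2$ of the full HOMFLY skein (of links in $S^3$, and of the annulus $\mathcal{C}$), each of which leaves a diagram unchanged and merely contributes a sign $(-1)$ for every crossing. At the level of the Hecke category, $\mathcal{A}_1$ interchanges the $+q$- and $-q^{-1}$-eigenspaces of each braiding generator, hence interchanges the full symmetrizer and antisymmetrizer, and therefore, by the inductive construction of the idempotents in \cite{L} and the description of $Q_{\lambda,\mu}$ in \cite{HM}, sends $Q_{\lambda,\mu}$ to $Q_{\lambda^t,\mu^t}$; the automorphism $\mathcal{A}_2$ fixes these eigenspaces and fixes each $Q_{\lambda,\mu}$. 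Applying $\mathcal{A}_1$ and $\mathcal{A}_2$ to $\mathcal{H}(\mathcal{L}\star\otimes_{\alpha}Q_{\lambda^\alpha,\mu^\alpha})$ and multiplying by the normalization factors recorded above then yields $W_{[\lambda^1,\mu^1],\dots}(\mathcal{L};q^{-1},-a)=(-1)^{N+E_a}W_{[(\lambda^1)^t,(\mu^1)^t],\dots}(\mathcal{L};q,a)$ and $W_{[\lambda^1,\mu^1],\dots}(\mathcal{L};-q,-a)=(-1)^{N+E_a}W_{[\lambda^1,\mu^1],\dots}(\mathcal{L};q,a)$, where $N$ is the number of crossings of the cable obtained by blowing up a fixed diagram of $\mathcal{L}$. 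The elementary point is that $N\equiv E_a\pmod 2$: a crossing of $\mathcal{L}$ between $\mathcal{K}_\alpha$ and $\mathcal{K}_\beta$ blows up into $(|\lambda^\alpha|+|\mu^\alpha|)(|\lambda^\beta|+|\mu^\beta|)$ crossings, the total number of crossings between two distinct components of any diagram is even (its parity is that of twice their linking number), and the number of self-crossings of $\mathcal{K}_\alpha$ has the parity of $w(\mathcal{K}_\alpha)$; hence these two identities carry trivial sign.

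The second input is (1.13) by itself. Because $W$ is framing independent, $W_{[\lambda^1,\mu^1],\dots}(\mathcal{L})$ is the ambient-isotopy HOMFLYPT polynomial $P$ of the decorated cable $\mathcal{L}\star\otimes_{\alpha}Q_{\lambda^\alpha,\mu^\alpha}$, and induction on crossings using Figure~1 (exactly as the Jones polynomial is treated in the Introduction) gives $P(\mathcal{D};q,-a)=(-1)^{c(\mathcal{D})}P(\mathcal{D};q,a)$ for an honest link $\mathcal{D}$ with $c(\mathcal{D})$ components. Expanding products of parallel copies of the core of the annulus, taken with both orientations, in the basis $\{Q_{\lambda,\mu}\}$ shows that every $Q_{\lambda,\mu}$ occurring there has $|\lambda|+|\mu|$ of one fixed parity, equal to the number of these parallel copies, which is also the number of components of the corresponding honest cable; letting $\mathcal{L}$ vary so that the invariants $P(\mathcal{L}\star Q_{\lambda,\mu})$ are not subject to unexpected relations, a triangularity argument then forces $P(\mathcal{L}\star Q_{\lambda,\mu};q,-a)=(-1)^{|\lambda|+|\mu|}P(\mathcal{L}\star Q_{\lambda,\mu};q,a)$, and multiplying over the components of $\mathcal{L}$ yields (1.13).

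Finally, composing the two trivial-sign identities of the second paragraph with (1.13) applied at $q^{-1}$, respectively at $-q$, cancels the extra substitution $a\mapsto-a$ and produces (1.11) and (1.12), each with the stated sign $(-1)^{\sum_{\alpha}(|\lambda^\alpha|+|\mu^\alpha|)}$. I expect the parts requiring the most care to be the sign bookkeeping behind $N\equiv E_a\pmod 2$ (blowing up a diagram, together with the parity of linking numbers) and the justification of the triangularity argument used for (1.13); the rest is a direct unwinding of the skein relations of Figure~1 and of the Hadji--Morton basis.
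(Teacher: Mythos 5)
Your route is genuinely different from the paper's: you act on the skein by the two semilinear maps that multiply every crossing by $-1$ (compatible with $(q,a)\mapsto(q^{-1},-a)$ and with $(q,a)\mapsto(-q,-a)$) and track crossing-number parity, whereas the paper expands $Q_{\lambda,\mu}$ through $Q_{\rho}Q_{\nu}^{*}$ into power sums $P_{\tau}P_{\delta}^{*}$, uses Lemma 4.1 (that $\{\tau\}\{\delta\}\mathcal{H}(\mathcal{K}\star P_{\tau}P_{\delta}^{*})$ lies in $\mathbb{Z}[(q-q^{-1})^{2},a^{\pm1}]$, via $\{m\}P_m=(q-q^{-1})\sum_j A_{m-1-j,j}$) to read off the $q\mapsto -q$ and $q\mapsto q^{-1}$ behaviour, character and Littlewood--Richardson identities for the transposition, and imports the $a\mapsto-a$ statement from \cite{CLPZ}. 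Your bookkeeping for the normalization factor, the congruence $N\equiv E_a\pmod 2$, and the final composition of the two ``trivial-sign'' identities with (1.13) are all correct. The assertions $\mathcal{A}_1(Q_{\lambda,\mu})=Q_{\lambda^t,\mu^t}$ and $\mathcal{A}_2(Q_{\lambda,\mu})=Q_{\lambda,\mu}$ \emph{with no extra sign} are true but are doing real work and need more than the eigenspace remark: check them on the generators (e.g.\ $a_m=\sum_{\pi}q^{l(\pi)}\omega_{\pi}\mapsto\sum_{\pi}(-q^{-1})^{l(\pi)}\omega_{\pi}$ together with $\alpha_m\mapsto\beta_m$ gives $h_m\mapsto e_m$ and $h_m^{*}\mapsto e_m^{*}$ exactly) and then invoke the dual determinantal expression for $\det M_{\lambda,\mu}$ in terms of the $e$'s.

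The genuine gap is in your derivation of (1.13). For fixed $(d,t)$ there is exactly \emph{one} crossingless pattern $h_1^{d}(h_1^{*})^{t}$, so the honest-link parity rule applied to $\mathcal{L}\star h_1^{d}(h_1^{*})^{t}$ produces a single linear relation among the many unknowns $\mathcal{H}(\mathcal{L}\star Q_{\lambda,\mu};q,-a)$ with $|\lambda|\le d$, $|\mu|\le t$; varying $\mathcal{L}$ only produces one such relation per link, each among a fresh set of unknowns attached to that link, so no triangularity can isolate an individual $Q_{\lambda,\mu}$ (already for $d=2$, $t=0$ you get one relation for the two unknowns indexed by $(2)$ and $(1,1)$). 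Moreover the intermediate claim that the number of components of the honest cable equals $|\lambda|+|\mu|$ fails for general diagrams representing $Q_{\lambda,\mu}$: the closed braid $\widehat{\sigma_1}=qQ_{(2)}-q^{-1}Q_{(1,1)}$ has one component, not two. The repair stays inside your framework but must use \emph{all} honest diagrams: $\mathcal{C}_{d,t}$ is spanned over $\mathbb{Z}[q^{\pm1}]$ (localized at the $\{k\}$, with no $a$-dependence) by genuine link diagrams $T$ in the annulus, and for every such $T$ the quantity $c(\mathcal{K}\star T)+\mathrm{cr}(\mathcal{K}\star T)\bmod 2$ is the same, namely $\equiv(d+t)\bigl(w(\mathcal{K})+1\bigr)$, because $c+\mathrm{cr}\bmod 2$ is preserved by the skein relations and by regular isotopy while $\mathrm{cr}(\mathcal{K}\star T)\equiv\mathrm{cr}(T)+(d+t)w(\mathcal{K})$. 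Hence the honest-link sign rule gives one and the same sign on every spanning diagram, therefore on $Q_{\lambda,\mu}$ itself; this recovers the paper's formulas (4.20)--(4.21) directly, with no triangularity and no need to vary $\mathcal{L}$. With that replacement your argument closes.
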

\begin{remark}
When all the $\mu^\alpha=\emptyset$, it is well-known that the above
symmetries hold for the ordinary colored HOMFLYPT invariants
$W_{\lambda^1,..,\lambda^L}(\mathcal{L};q,a)$ of a link
$\mathcal{L}$:
\begin{align}
W_{\vec{\lambda}}(\mathcal{L};q^{-1},t)&=(-1)^{\|\vec{\lambda}\|}W_{\vec{\lambda}^{t}}(\mathcal{L};q,t),\\
W_{\vec{\lambda}}(\mathcal{L};-q^{-1},t)&=W_{\vec{\lambda}^t}(\mathcal{L};q,t),\\
W_{\vec{\lambda}}(\mathcal{L};q,-t)&=(-1)^{\|\vec{\lambda}\|}W_{\vec{\lambda}}(\mathcal{L};q,t).
\end{align}
The first proof of these symmetries was given in the paper
\cite{LP1,LP2}, a different proof for (1.14) and (1.15) was given in
\cite{Zhu}. Then, in the joint paper with Q. Chen, K. Liu and P.
Peng \cite{CLPZ}, we provide a new simple proof for
(1.14),(1.15),(1.16). Recent,  D. Tubbenhauer, P. Vaz and P. Wedrich
\cite{TVW} also give a different proof for (1.14). In this paper, we
follow the method used in \cite{CLPZ} to prove Theorem 1.3 in
Section 4.
\end{remark}
Then, by using the integrality theorem in \cite{Morton} (i.e.
Theorem 1.1 in \cite{Morton}) with a slight modification,  we finish
the proof of Theorem 1.2  in Section 5.

\bigskip

{\bf Acknowledgements.} The author appreciate the collaborations
with Qingtao Chen, Kefeng Liu and Pan Peng in this area and many
valuable discussions with them within the past years.

\section{The Skein models}
Given a planar surface $F$, the framed HOMFLY skein $\mathcal{S}(F)$
of $F$ is the $\Lambda$-linear combination of orientated tangles in
$F$, modulo the two local relations as showed in figure \ref{Skein},
where $z=q-q^{-1}$,
\begin{figure}[!htb]
\begin{center}
\includegraphics[width=180 pt]{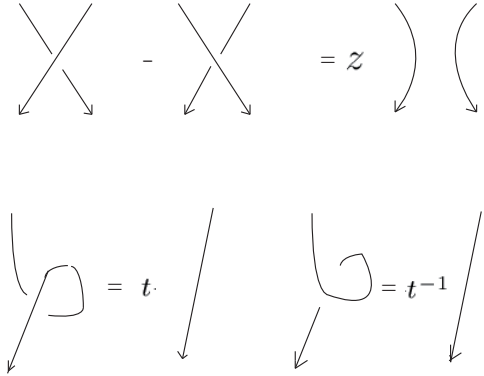}\caption{\label{Skein}Two local relations}
\end{center}
\end{figure}
the coefficient ring $\Lambda=\mathbb{Z}[q^{\pm 1}, a^{\pm 1} ]$
with the elements $q^{k}-q^{-k}$ admitted as denominators for $k\geq
1$. The local relation showed in figure \ref{Removal}
\begin{figure}[!htb]
\begin{center}
\includegraphics[width=100 pt]{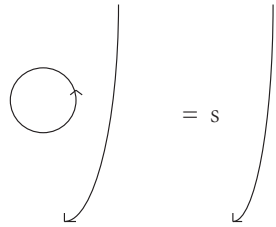}\caption{\label{Removal}Removal of a null-homotopic closed curve}
\end{center}
\end{figure}
is a consequence of the above relations. It follows that the removal
of a null-homotopic closed curve without crossings is equivalent to
time a scalar $s=\frac{a-a^{-1}}{q-q^{-1}}$.

\subsection{The plane} When $F=\mathbb{R}^2$, it is easy to follow
that every element in $\mathcal{S}(F)$ can be represented as a
scalar in $\Lambda$. For a link $\mathcal{L}$ with a diagram
$D_{\mathcal{L}}$, the resulting scalar $\langle D_{\mathcal{L}}
\rangle \in \Lambda$ is the framed HOMFLYPT polynomial
 of the link $\mathcal{L}$. In the
following, we will also use the notation
$\mathcal{H}(\mathcal{L};q,a)$ to denote the $\langle
D_{\mathcal{L}}\rangle$ for simplicity. In particular, as to the
unknot $U$, we have $\mathcal{H}(U;q,a)=\frac{a-a^{-1}}{q-q^{-1}}$.

\subsection{The annulus}
Let $\mathcal{C}$ be the HOMFLY skein of the annulus, i.e.
$\mathcal{C}=\mathcal{S}(S^1\otimes I)$. $\mathcal{C}$ is a
commutative algebra with the product induced by placing one annulus
outside another. $\mathcal{C}$ is freely generated by the set
$\{A_m: m\in \mathbb{Z}\}$, $A_m$ for $m\neq 0$ is the closure of
the braid $\sigma_{|m|-1}\cdots \sigma_2\sigma_1$, and $A_0$ is the
empty diagram \cite{Turaev2}. It follows that $\mathcal{C}$ contains
two subalgebras $\mathcal{C}_{+}$ and $\mathcal{C}_{-}$ which are
generated by $\{A_m: m\in \mathbb{Z}, m\geq 0\}$ and $\{A_m:m\in
\mathbb{Z}, m\leq 0\}$.  The algebra $\mathcal{C}_+$ is spanned by
the subspace $\mathcal{C}_{n,0}$. There is a good basis
$\{Q_{\lambda}\}$ of $\mathcal{C}_+$ consisting of the closures of
certain idempotents of Hecke algebra $H_{n,0}$ \cite{L}.

In \cite{HM}, R. Hadji and H. Morton constructed the basis elements
$\{Q_{\lambda,\mu}\}$ explicitly for $\mathcal{C}$. We will review
this construction in next section.

\section{Full colored HOMFLYPT invariants}
\subsection{Partitions and symmetric functions}
A partition $\lambda$ is a finite sequence of positive integers
$(\lambda_1,\lambda_2,..)$ such that
$
\lambda_1\geq \lambda_2\geq\cdots
$
The length of $\lambda$ is the total number of parts in $\lambda$
and denoted by $l(\lambda)$. The degree of $\lambda$ is defined by
$
|\lambda|=\sum_{i=1}^{l(\lambda)}\lambda_i.
$
If $|\lambda|=d$, we say $\lambda$ is a partition of $d$ and denoted
as $\lambda\vdash d$. The automorphism group of $\lambda$, denoted
by Aut($\lambda$), contains all the permutations that permute parts
of $\lambda$ by keeping it as a partition. Obviously, Aut($\lambda$)
has the order
$
|\text{Aut}(\lambda)|=\prod_{i=1}^{l(\lambda)}m_i(\lambda)!
$
where $m_i(\lambda)$ denotes the number of times that $i$ occurs in
$\lambda$. We can also write a partition $\lambda$ as
$
\lambda=(1^{m_1(\lambda)}2^{m_2(\lambda)}\cdots).
$
We denote by $\mathcal{P}$ the
set of all the partitions.

Every partition can be identified as a Young diagram. The Young
diagram of $\lambda$ is a graph with $\lambda_i$ boxes on the $i$-th
row for $j=1,2,..,l(\lambda)$, where we have enumerate the rows from
top to bottom and the columns from left to right. The $j$-th box in
the $i$-th row has the coordinates $(i,j)$. The content $cn(x)$ of
the box $x=(i,j)$ is defined to be $j-i$.

Given a partition $\lambda$, we define the conjugate partition
$\lambda^t$ whose Young diagram is the transposed Young diagram of
$\lambda$ which is derived from the Young diagram of $\lambda$ by
reflection in the main diagonal. For the box $x=(i,j)\in \lambda$,
the hook length is defined to be
$hl(x)=\lambda_i+\lambda_j^{t}-i-j+1$.

The following numbers associated with a given partition $\lambda$
are used frequently in this paper: $
z_\lambda=\prod_{j=1}^{l(\lambda)}j^{m_{j}(\lambda)}m_j(\lambda)!, \
\kappa_{\lambda}=\sum_{j=1}^{l(\lambda)}\lambda_j(\lambda_j-2j+1). $
Obviously, $\kappa_\lambda$ is an even number and
$\kappa_\lambda=-\kappa_{\lambda^t}$.

The $m$-th complete symmetric function $h_m$ and elementary
symmetric function are defined by the following two generating
functions
\begin{align}
H(t)=\sum_{m\geq 0}h_mt^m=\prod_{i\geq 1}\frac{1}{(1-x_it)}, \
E(t)=\sum_{m\geq 0}e_mt^m=\prod_{i\geq 1}(1+x_it),
\end{align}
respectively.

The power sum symmetric function of infinite variables
$x=(x_1,..,x_N,..)$ is defined by $ p_{n}(x)=\sum_{i}x_i^n. $ Given
a partition $\lambda$, we define $
p_\lambda(x)=\prod_{j=1}^{l(\lambda)}p_{\lambda_j}(x). $ The Schur
function $s_{\lambda}(x)$ is determined by the Frobenius formula
\begin{align}
s_\lambda(x)=\sum_{|\mu|=|\lambda|}\frac{\chi_{\lambda}(C_\mu)}{z_\mu}p_\mu(x).
\end{align}
where $\chi_\lambda$ is the character of the irreducible
representation of the symmetric group $S_{|\mu|}$ corresponding to
$\lambda$. $C_\mu$ denotes the conjugate class of symmetric group
$S_{|\mu|}$ corresponding to partition $\mu$. The orthogonality of
character formula gives
\begin{align}
\sum_A\frac{\chi_A(C_\mu) \chi_A(C_\nu)}{z_\mu}=\delta_{\mu \nu}.
\end{align}

For $\lambda,\mu,\nu\in \mathcal{P}$, we define the
littlewood-Richardson coefficient $c_{\lambda,\mu}^{\nu}$ as
\begin{align}
s_{\lambda}(x)s_{\mu}(x)=\sum_{\nu}c_{\lambda,\mu}^{\nu}s_{\nu}(x).
\end{align}
It is easy to see that $c_{\lambda,\mu}^{\nu}$ can be expressed by
the characters of symmetric group by using the Frobenius formula
\begin{align}
c_{\lambda,\mu}^{\nu}=\sum_{\rho,\tau}\frac{\chi_{\lambda}(C_{\rho})}{z_{\rho}}\frac{\chi_{\lambda}(C_{\tau})}{z_{\tau}}\chi_{\nu}(C_{\rho\cup
\tau}).
\end{align}

\subsection{Basic elements in $\mathcal{C}$}
Given a permutation $\pi\in S_m$ with the length $l(\pi)$, let
$\omega_\pi$ be the positive permutation braid associated to $\pi$.
We have $l(\pi)=w(\omega_\pi)$, the writhe number of the braid
$\omega_{\pi}$.

 We define the quasi-idempotent element in Hecke alegbra $H_m$,
\begin{align}
a_m=\sum_{\pi\in S_m}q^{l(\pi)}\omega_{\pi}
\end{align}
Let element $h_m\in \mathcal{C}_{m,0}$ be the closure of the
elements $\frac{1}{\alpha_m}a_m\in H_m$, i.e
$h_m=\frac{1}{\alpha_m}\hat{a}_m$. Where $\alpha_m$ is determined by
the equation $a_ma_m=\alpha_m a_m$, it gives
$\alpha_m=q^{m(m-1)/2}\prod_{i=1}^m\frac{q^i-q^{-i}}{q-q^{-1}}$.

The skein $\mathcal{C}_+$ ($\mathcal{C}_-$) is spanned by the
monomials in $\{h_m\}_{m\geq 0}$ ($\{h_k^*\}_{k\geq 0}$). The whole
skein $\mathcal{C}$ is spanned by the monomials in $\{h_m,
h_k^*\}_{m,k\geq 0}$. $\mathcal{C}_+$ can be regarded as the ring of
symmetric functions in variables $x_1,..,x_N,..$ with the
coefficient ring $\Lambda$ \cite{L2}. In this situation,
$\mathcal{C}_{m,0}$ consists of the homogeneous functions of degree
$m$. The power sum $P_m=\sum x_i^m$ are symmetric functions which
can be represented in terms of the complete symmetric functions,
hence $P_m\in \mathcal{C}_{m,0}$. Moreover, the following identity
was first obtained in \cite{Ai}, and see \cite{Mor} for a different
proof:
\begin{align}
[m]P_m=X_m=\sum_{j=0}^{m-1}A_{m-1-j,j}.
\end{align}
where $[m]=\frac{q^m-q^{-m}}{q-q^{-1}}$ and $A_{i,j}$ is the closure
of the braid $\sigma_{i+j}\sigma_{i+j-1}\cdots
\sigma_{j+1}\sigma_{j}^{-1}\cdots \sigma_1^{-1}$. Given a partition
$\mu$, we define
\begin{align}
P_{\mu}=\prod_{i=1}^{l(\mu)}P_{\mu_i}.
\end{align}

Then, in \cite{HM}, Hadji and Morton constructed the basis
$Q_{\lambda,\mu}$ on the whole skein $\mathcal{C}$ as follow. Given
two partitions $\lambda, \mu$ with $l$ and $r$ parts. We first
construct a $(l+r)\times (l+r)$-matrix $M_{\lambda,\mu}$ with
entries in $\{h_m, h_k^*\}_{m,k\in \mathbb{Z}}$ as follows, where we
have let $h_{m}=0$, if $m<0$ and $h_{k}^*=0$ if $k <0$.

\begin{align}
M_{\lambda,\mu}=
\begin{pmatrix}
h_{\mu_{r}}^* & h_{\mu_{r}-1}^* & \cdots & h_{\mu_{r}-r-l+1}^* \\
h_{\mu_{r-1}+1}^* & h_{\mu_{r-1}}^* & \cdots & h_{\mu_{r-1}-r-l}^*\\
\cdot & \cdot & \cdots & \cdot \\
h_{\mu_1+(r-1)}^* & h_{\mu_1+(r-2)}^* & \cdots & h_{\mu_1-l}^*\\
h_{\lambda_1-r} & h_{\lambda_1-(r-1)} & \cdots & h_{\lambda_1+l-1}\\
\cdot & \cdot & \cdots & \cdot \\
h_{\lambda_l-l-r+1} & h_{\lambda_l-s-r+2} & \cdots & h_{\lambda_l}
\end{pmatrix}
\end{align}
It is easy to note that the subscripts of the diagonal entries in
the $h$-rows are the parts $\lambda_1,\lambda_2,...,\lambda_l$ of
$\lambda$ in order, while the subscripts of the diagonal entries in
the $h^*$-rows are the parts $\mu_1,\mu_2,..,\mu_r$ of $\mu$ in
reverse order.

Then, $Q_{\lambda,\mu}$ is defined as the determinant of the matrix
$M_{\lambda,\mu}$, i.e.
\begin{align}
Q_{\lambda,\mu}=\det M_{\lambda,\mu}.
\end{align}
Usually, we  write $Q_{\lambda}=Q_{\lambda,\emptyset}$ and
$Q_{\mu}^*=Q_{\emptyset,\mu}$, we have
\begin{align}
Q_{\lambda,\mu}=\sum_{\sigma,\rho,\nu}(-1)^{|\sigma|}c_{\sigma,\rho}^{\lambda}c_{\sigma^t,\nu}^{\mu}Q_{\rho}
Q_{\nu}^*.
\end{align}
The Frobenius formula (3.2) implies
\begin{align}
Q_{\lambda}=\sum_{\mu}\frac{\chi_{\lambda}(\mu)}{z_{\mu}}P_{\mu}.
\end{align}

\subsection{Full colored HOMFLYPT invariants}
Let $\mathcal{L}$ be a framed link with $L$ components with a fixed
numbering. For diagrams $Q_1,..,Q_L$ in the skein model of annulus
with the positive oriented core $\mathcal{C}$, we define the
decoration of $\mathcal{L}$ with $Q_1,..,Q_L$ as the link
\begin{align}
\mathcal{L}\star \otimes_{\alpha=1}^{L} Q_\alpha
\end{align}
which derived from $\mathcal{L}$ by replacing every annulus
$\mathcal{L}$ by the annulus with the diagram $Q_\alpha$ such that
the orientations of the cores match. Each $Q_\alpha$ has a small
backboard neighborhood in the annulus which makes the decorated link
$\mathcal{L}\otimes_{\alpha=1}^{L}Q_\alpha$ into a framed link.

In particular, when $Q_{\lambda^\alpha,\mu^\alpha}\in
\mathcal{C}_{d_\alpha,t_\alpha}$, where $\lambda^\alpha, \mu^\alpha$
are the partitions of positive integers $d_\alpha$ and $t_\alpha$
respectively, for $\alpha=1,..,L$.

\begin{definition}
The full colored HOMFLYPT invariant of $\mathcal{L}$ is defined as
follow:
\begin{align}
&W_{[\lambda^1,\mu^1],[\lambda^2,\mu^2],..,[\lambda^L,\mu^L]}(\mathcal{L};q,a)\\\nonumber
&=q^{-\sum_{\alpha=1}^L(\kappa_{\lambda^\alpha}+\kappa_{\mu^\alpha})w(\mathcal{K}_\alpha)}
t^{-\sum_{\alpha=1}^L(|\lambda^\alpha|+|\mu^\alpha|)w(\mathcal{K}_{\alpha})}
\mathcal{H}(\mathcal{L}\star\otimes_{\alpha=1}^L
Q_{\lambda^\alpha,\mu^\alpha};q,a).
\end{align}
$W_{[\lambda^1,\mu^1],[\lambda^2,\mu^2],..,[\lambda^L,\mu^L]}(\mathcal{L};q,a)$
is a framing independent invariant of link $\mathcal{L}$. In fact,
by the result in \cite{GT}, the framing factor for $Q_{\lambda,\mu}$
is $q^{\kappa_{\lambda}+\kappa_{\mu}} t^{|\lambda|+|\mu|}$.
\end{definition}

\section{The symmetries}
Before giving the proof of Theorem 1.3, we need an observation which
is an easy consequence of the formula (3.7). We introduce the
notation $\{m\}=q^{m}-q^{-m}$, and for a partition $\lambda\in
\mathcal{P}$, we let
$\{\lambda\}=\prod_{i=1}^{l(\lambda)}\{\lambda_i\}$.
\begin{lemma}
Given any two partitions $\lambda$ and $\mu$, for a knot
$\mathcal{K}$, we have
\begin{align}
\{\lambda\}\{\mu\}\mathcal{H}(\mathcal{K}\star
P_{\lambda}P_{\mu}^*;q,a)\in \mathbb{Z}[(q-q^{-1})^2,a^{\pm 1}].
\end{align}
\end{lemma}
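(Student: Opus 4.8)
The plan is to reduce the statement to Morton's integrality theorem for the ordinary colored HOMFLYPT invariants of a knot by expanding the power sums $P_\lambda$, $P_\mu^*$ in terms of the Schur-type basis elements $Q_\nu$, $Q_\rho^*$. First I would recall from formula (3.7) the key fact that $[m]P_m = \sum_{j=0}^{m-1} A_{m-1-j,j}$, so that $\{m\}P_m = (q-q^{-1})\sum_j A_{m-1-j,j}$ lies in the skein $\mathcal{C}$ with coefficients in $\mathbb{Z}[q^{\pm1}]$ (no denominators needed), and more precisely its coefficients lie in $(q-q^{-1})\mathbb{Z}[q^{\pm1}]$ after we strip the obvious factor. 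Taking products over the parts of $\lambda$ and $\mu$, the element $\{\lambda\}\{\mu\}\,P_\lambda P_\mu^*$ is a $\mathbb{Z}[q^{\pm1}]$-linear combination of the closures of products of the braids $A_{i,j}$ and their mirror images, hence a $\mathbb{Z}[q^{\pm1}]$-linear combination of decorations by monomials in $\{h_m,h_k^*\}$; inverting the unitriangular change of basis between these monomials and the $\{Q_{\nu,\rho}\}$ (which involves only integer Littlewood–Richardson coefficients, cf. (3.13)) expresses $\{\lambda\}\{\mu\}\,P_\lambda P_\mu^*$ as $\sum_{\nu,\rho} c_{\nu,\rho}\, Q_{\nu,\rho}$ with $c_{\nu,\rho}\in\mathbb{Z}[q^{\pm1}]$, where the sum is finite and ranges over partitions with $|\nu|\le|\lambda|$, $|\rho|\le|\mu|$.

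Next I would apply this expansion inside the skein of the plane after decorating the knot $\mathcal{K}$: by linearity,
\begin{align}
\{\lambda\}\{\mu\}\,\mathcal{H}(\mathcal{K}\star P_\lambda P_\mu^*;q,a)
=\sum_{\nu,\rho} c_{\nu,\rho}\,\mathcal{H}(\mathcal{K}\star Q_{\nu,\rho};q,a).
\end{align}
Each term $\mathcal{H}(\mathcal{K}\star Q_{\nu,\rho};q,a)$ differs from $W_{[\nu,\rho]}(\mathcal{K};q,a)$ only by the monomial framing factor $q^{(\kappa_\nu+\kappa_\rho)w(\mathcal{K})}a^{(|\nu|+|\rho|)w(\mathcal{K})}$, which is a unit in $\mathbb{Z}[q^{\pm1},a^{\pm1}]$, so it suffices to know that each $W_{[\nu,\rho]}(\mathcal{K};q,a)$, suitably normalized, lies in a controlled ring. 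Here I invoke Morton's integrality theorem (Theorem 1.1 of \cite{Morton}): the full colored invariant of a knot, divided by $\mathcal{H}(U;q,a)=s$, lies in $\mathbb{Z}[q^{\pm1},a^{\pm1}]$; equivalently $W_{[\nu,\rho]}(\mathcal{K};q,a)$ itself, being a decoration of a single-component link by an element of $\mathcal{C}$, is $s$ times such a Laurent polynomial — and after the framing-factor adjustment it still has coefficients in $\mathbb{Z}[q^{\pm1},a^{\pm1}]$ up to the single overall factor $s=\frac{a-a^{-1}}{q-q^{-1}}$.

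Finally I would combine the two ingredients: the factor $(q-q^{-1})$ coming from each $\{m\}P_m$ in the expansion supplies enough copies of $(q-q^{-1})$ to cancel the single $(q-q^{-1})$ in the denominator of $s$, and the symmetry $q\mapsto -q$ (Theorem 1.3, or the classical version in the Remark) forces the resulting Laurent polynomial in $q$ to be even, i.e. to lie in $\mathbb{Z}[(q-q^{-1})^2,a^{\pm1}]$ rather than merely $\mathbb{Z}[q^{\pm1},a^{\pm1}]$; indeed $\mathbb{Z}[q^{\pm1}]^{\text{even}}=\mathbb{Z}[(q-q^{-1})^2,(q+q^{-1})^2]$ but the appropriate degree/parity bookkeeping pins it down to the stated subring. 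The main obstacle I anticipate is the careful bookkeeping of the $(q-q^{-1})$-divisibility: one must check that expanding $\{\lambda\}\{\mu\}P_\lambda P_\mu^*$ into the $Q_{\nu,\rho}$ basis does not secretly reintroduce denominators through the change-of-basis matrix (it should not, since that matrix is integral), and that exactly one net factor $(q-q^{-1})$ survives to meet the $s$ in the denominator — equivalently, that the product of the $[m]$-normalizations is accounted for correctly. Establishing that this divisibility is uniform over all terms in the (finite) sum, together with the parity statement, is where the real work lies.
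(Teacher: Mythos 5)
There is a genuine gap, and it sits exactly where you flagged ``the real work'': the $(q-q^{-1})$-divisibility bookkeeping does not close along your route. Your middle step asserts that each $\mathcal{H}(\mathcal{K}\star Q_{\nu,\rho};q,a)$ is, up to a monomial framing factor, $s=\frac{a-a^{-1}}{q-q^{-1}}$ times an element of $\mathbb{Z}[q^{\pm1},a^{\pm1}]$. That is a misreading of Morton's theorem: the correct normalization divides by $\mathcal{H}(U\star Q_{\nu,\rho};q,a)$, not by $\mathcal{H}(U;q,a)=s$, and by formula (5.15) the unknot value $\mathcal{H}(U\star Q_{\nu,\rho};q,a)$ carries $|\nu|+|\rho|$ hook-length denominators $(q^{hl(x)}-q^{-hl(x)})$, not a single $(q-q^{-1})$. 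So the individual terms of your expansion $\sum_{\nu,\rho}c_{\nu,\rho}\,\mathcal{H}(\mathcal{K}\star Q_{\nu,\rho};q,a)$ have uncontrolled denominators, and the stated conclusion could only come from cancellation across the sum --- which is precisely what you would then have to prove. Two further problems: invoking Theorem 1.3 for the $q\mapsto -q$ parity is circular, since in the paper that symmetry is \emph{deduced from} this lemma; and even granting that the result is an even Laurent polynomial in $q$, that does not place it in $\mathbb{Z}[(q-q^{-1})^2,a^{\pm1}]=\mathbb{Z}[q^{2}+q^{-2},a^{\pm1}]$, which additionally requires invariance under $q\mapsto q^{-1}$.

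The paper's proof avoids all of this by never passing to the $Q_{\nu,\rho}$ basis. It uses only two elementary facts: (i) for any $L$-component link $\mathcal{L}$, induction via the skein relation gives $(q-q^{-1})^{L}\mathcal{H}(\mathcal{L};q,a)\in\mathbb{Z}[(q-q^{-1})^{2},a^{\pm1}]$; and (ii) $\{\lambda\}\{\mu\}P_{\lambda}P_{\mu}^{*}=(q-q^{-1})^{l(\lambda)+l(\mu)}\sum\prod A_{i,j}$ (together with mirrors), where each summand decorates $\mathcal{K}$ by a product of single closed curves, so that $\mathcal{K}\star(\cdot)$ is a link with exactly $l(\lambda)+l(\mu)$ components. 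The power of $(q-q^{-1})$ supplied by (ii) is exactly the power consumed by (i), and the target ring $\mathbb{Z}[(q-q^{-1})^{2},a^{\pm1}]$ falls out directly, with no appeal to Morton's integrality theorem or to any symmetry. If you want to repair your argument, stay at the level of the $A_{i,j}$ and count link components rather than expanding in the $Q_{\nu,\rho}$ basis.
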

\begin{proof}
By using the skein relations in Figure 1 for framed HOMFLYPT
invariant $\mathcal{H}(\mathcal{L};q,a)$, we have, for a link
$\mathcal{L}$ with $L$ components,
\begin{align}
(q-q^{-1})^L\mathcal{H}(\mathcal{L};q,a) \in
\mathbb{Z}[(q-q^{-1})^2,a^{\pm 1}].
\end{align}
The formula (3.7) gives:
\begin{align}
\{m\}P_m=(q-q^{-1})X_m=(q-q^{-1})\sum_{j=0}^{m-1}A_{m-1-j,j}.
\end{align}
Therefore, by using formula (4.2), we get
\begin{align}
\{m\}\mathcal{H}(\mathcal{K}\star
P_m;q,a)=(q-q^{-1})\sum_{j=0}^{m-1}\mathcal{H}(\mathcal{K}\star
A_{m-1-j,j};q,a)\in \mathbb{Z}[(q-q^{-1})^2,a^{\pm 1}],
\end{align}
since $\mathcal{K}\star A_{m-1-j,j}$ is an one-component link for
$j=0,..,m-1$. Similarly, it is straightforward to obtain the formula
(4.1) in the same way.
\end{proof}

By using the above integrality result, we prove the following
symmetries for the full colored HOMFLYPT invariants.
\begin{theorem}
For any link $\mathcal{L}$ with $L$-components,
\begin{align}
W_{[\lambda^1,\mu^1],..,[\lambda^L,\mu^L]}(\mathcal{L};q^{-1},a)&=(-1)^{\sum_{\alpha=1}^L(|\lambda^\alpha|+|\mu^\alpha|)}
W_{[(\lambda^1)^t,(\mu^1)^t],..,[(\lambda^L)^t,(\mu^L)^t]}(\mathcal{L};q,a),\\
W_{[\lambda^1,\mu^1],..,[\lambda^L,\mu^L]}(\mathcal{L};-q,a)&=(-1)^{\sum_{\alpha=1}^L(|\lambda^\alpha|+|\mu^\alpha|)}
W_{[\lambda^1,\mu^1],..,[\lambda^L,\mu^L]}(\mathcal{L};q,a),
\\
W_{[\lambda^1,\mu^1],..,[\lambda^L,\mu^L]}(\mathcal{L};q,-a)&=(-1)^{\sum_{\alpha=1}^L(|\lambda^\alpha|+|\mu^\alpha|)}
W_{[\lambda^1,\mu^1],..,[\lambda^L,\mu^L]}(\mathcal{L};q,a).
\end{align}
\end{theorem}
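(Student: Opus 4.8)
The plan is to reduce the three symmetries for $W_{[\lambda^1,\mu^1],\dots,[\lambda^L,\mu^L]}$ to statements about the basis elements $Q_{\lambda,\mu}$ acting in the skein of the annulus, and ultimately to the integrality Lemma~4.1 expressed in the power-sum basis. First I would rewrite each $Q_{\lambda^\alpha,\mu^\alpha}$ in terms of the products $P_{\rho^\alpha}(P_{\nu^\alpha})^*$ via formulas (3.13) and (3.14): expanding the determinant $\det M_{\lambda,\mu}$ and applying the Frobenius formula gives $Q_{\lambda,\mu}=\sum_{\rho,\nu} (\text{rational character coefficients}) P_\rho P_\nu^*$, where the coefficients lie in $\mathbb{Q}$ and are independent of $q$ and $a$. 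Decorating $\mathcal{L}$ by these and using multilinearity of $\mathcal{H}(\mathcal{L}\star-;q,a)$ in each annulus, the invariant $\mathcal{H}(\mathcal{L}\star\otimes_\alpha Q_{\lambda^\alpha,\mu^\alpha};q,a)$ becomes a $\mathbb{Q}$-linear combination of $\mathcal{H}(\mathcal{L}\star\otimes_\alpha P_{\rho^\alpha}(P_{\nu^\alpha})^*;q,a)$.

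Next I would establish the symmetries at the level of the ``power-sum decorated'' invariants $\mathcal{H}(\mathcal{L}\star\otimes_\alpha P_{\rho^\alpha}(P_{\nu^\alpha})^*;q,a)$. The key point is the behaviour of $X_m=\sum_{j=0}^{m-1}A_{m-1-j,j}$ and hence $P_m=\frac{1}{[m]}X_m$ under $q\mapsto q^{-1}$, $q\mapsto -q$, and $a\mapsto -a$. Since each $A_{i,j}$ is the closure of an explicit braid, $\mathcal{H}(\mathcal{L}\star A$-decorations$)$ transforms predictably under these substitutions via the skein relations of Figure~1 (the relation is symmetric under $q\leftrightarrow q^{-1}$ up to sign of $z$, invariant under $q\mapsto -q$ up to the framing normalization, and the unknot value $s=(a-a^{-1})/(q-q^{-1})$ governs the $a\mapsto -a$ behaviour). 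Combining this with the explicit $q$-dependence of $[m]$ and of the framing prefactors $q^{-\sum(\kappa_{\lambda^\alpha}+\kappa_{\mu^\alpha})w(\mathcal{K}_\alpha)}a^{-\sum(|\lambda^\alpha|+|\mu^\alpha|)w(\mathcal{K}_\alpha)}$ (recall $\kappa_\lambda$ is even, $\kappa_\lambda=-\kappa_{\lambda^t}$), and tracking the total sign $(-1)^{\sum(|\lambda^\alpha|+|\mu^\alpha|)}$ coming from $|\rho^\alpha|=|\lambda^\alpha|$, $|\nu^\alpha|=|\mu^\alpha|$, one gets the three identities for the $P$-decorated invariants.

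Finally I would transport the symmetries back to $Q_{\lambda,\mu}$. For (1.11) ($q\mapsto q^{-1}$) the essential fact is that the change of variables sends $P_m \to P_m$ up to the $[m]$ factor, and that conjugation $\lambda\mapsto\lambda^t$ in the character coefficients $\chi_\lambda(\rho)/z_\rho$ matches the sign $(-1)^{|\rho|-l(\rho)}$ picked up by $P_\rho$; this is exactly the classical identity behind the ordinary-color symmetry (1.14), now applied in both the $h$ and $h^*$ sectors of $M_{\lambda,\mu}$, so the transpose acts on $(\lambda,\mu)$ componentwise as $[(\lambda^\alpha)^t,(\mu^\alpha)^t]$. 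For (1.12) and (1.13) the right-hand color labels are unchanged, so one only needs the degree-parity bookkeeping. I expect the main obstacle to be the careful sign accounting in step two and three: making sure the sign from the $[m]$-denominators, the sign from each power sum $P_{\rho_i}$ under $q\mapsto q^{-1}$, the framing-prefactor signs, and the overall $(-1)^{\sum(|\lambda^\alpha|+|\mu^\alpha|)}$ all combine correctly and uniformly across all $L$ components and across both the $\lambda$- and $\mu$-sectors — the integrality Lemma~4.1 is what guarantees these manipulations stay inside $\mathbb{Z}[(q-q^{-1})^2,a^{\pm1}]$ so that the substitutions are legitimate. Once the three power-sum-level identities are in hand, assembling them into Theorem~1.3 via the (substitution-independent) character coefficients is routine.
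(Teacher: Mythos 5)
Your overall architecture is the paper's own: expand each $Q_{\lambda^\alpha,\mu^\alpha}$ through (3.11)--(3.12) into products of power sums $P_\tau P_\delta^*$, prove the symmetries at that level, and transport them back through the $q,a$-independent character and Littlewood--Richardson coefficients, using $c_{\sigma,\rho}^{\lambda}=c_{\sigma^t,\rho^t}^{\lambda^t}$ and $\chi_{\rho^t}(\tau)=(-1)^{|\rho|-l(\tau)}\chi_{\rho}(\tau)$ for the $q\mapsto q^{-1}$ identity and pure degree-parity for $q\mapsto -q$. For those two identities your plan is sound and coincides with Section 4: Lemma 4.1 places $\{\tau\}\{\delta\}\mathcal{H}(\mathcal{K}\star P_{\tau}P_{\delta}^*;q,a)$ in $\mathbb{Z}[(q-q^{-1})^2,a^{\pm 1}]$, a ring fixed pointwise by both $q\mapsto -q$ and $q\mapsto q^{-1}$, so the signs $(-1)^{|\tau|+|\delta|}$ and $(-1)^{l(\tau)+l(\delta)}$ are read off from $\{\tau\}\{\delta\}$ alone; the framing prefactor is handled by $\kappa_\lambda$ even and $\kappa_{\lambda^t}=-\kappa_\lambda$, exactly as in the paper.

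The genuine gap is in the $a\mapsto -a$ identity. Lemma 4.1 gives no control here: membership in $\mathbb{Z}[(q-q^{-1})^2,a^{\pm 1}]$ says nothing about the parity of the $a$-exponents, and your remark that the unknot value $s=(a-a^{-1})/(q-q^{-1})$ ``governs'' the behaviour is not sufficient, since the $a$-dependence of the framed invariant of $\mathcal{K}\star P_{\tau}P_{\delta}^*$ comes both from circle removals (factors of $s$) and from kink removals (factors of $a^{\pm 1}$). What is actually needed is the writhe-dependent sign
$\mathcal{H}(\mathcal{K}\star P_{\tau}P_{\delta}^*;q,-a)=(-1)^{(|\tau|+|\delta|)(w(\mathcal{K})-1)}\mathcal{H}(\mathcal{K}\star P_{\tau}P_{\delta}^*;q,a)$,
i.e.\ the statement that every monomial occurring has $a$-exponent of a fixed parity determined by the writhe and the decoration degree; the paper does not reprove this but imports it as Lemma 4.3 (formula (4.22)) of \cite{CLPZ}. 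Only after combining that sign with the $(-1)^{(|\lambda|+|\mu|)w(\mathcal{K})}$ contributed by the framing prefactor $a^{-(|\lambda|+|\mu|)w(\mathcal{K})}$ does one obtain the writhe-independent sign $(-1)^{|\lambda|+|\mu|}$ in the third identity. Your sketch would need to supply this parity lemma (say, by induction on crossing resolutions, tracking the $a$-exponent modulo $2$ against writhe plus component count); as written it does not follow from anything you have set up.
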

\begin{proof}
For convenience, we only provide the proof for the case of a knot
$\mathcal{K}$.  The method can be easily generalized to the case for
a general link. By Lemma 4.1, for any two partitions $\tau$ and
$\delta$, we have,
\begin{align}
\{\tau\}\{\delta\}\mathcal{H}(\mathcal{K}\star
P_{\tau}P_{\delta}^*;q,a)\in \mathbb{Z}[(q-q^{-1})^2,a^{\pm 1}].
\end{align}
Combing with the identity $(\{\tau\}\{\delta\})_{q\rightarrow
-q}=(-1)^{|\tau|+|\delta|}\{\tau\}\{\delta\}$, we obtain
\begin{align}
\mathcal{H}(\mathcal{K}\star
P_{\tau}P_{\delta}^*;-q,a)=(-1)^{|\tau|+|\delta|}\mathcal{H}(\mathcal{K}\star
P_{\tau}P_{\delta}^*;q,a).
\end{align}
by the  formula (3.12),
\begin{align}
Q_{\rho}=\sum_{|\tau|=|\rho|}\frac{\chi_{\rho}(\tau)}{z_{\tau}}P_{\tau},
\
Q_{\nu}^*=\sum_{|\delta|=|\nu|}\frac{\chi_{\nu}(\delta)}{z_{\delta}}P_{\delta}^*.
\end{align}
So we have
\begin{align}
\mathcal{H}(\mathcal{K}\star
Q_{\rho}Q_{\nu}^*;-q,a)=(-1)^{|\rho|+|\nu|}\mathcal{H}(\mathcal{K}\star
Q_{\rho}Q_{\nu}^*;q,a).
\end{align}
By the formula (3.11) for $Q_{\lambda,\mu}$,
\begin{align}
\mathcal{H}(\mathcal{K}\star
Q_{\lambda,\mu};-q,a)&=\sum_{\sigma,\rho,\nu}(-1)^{|\sigma|}c_{\sigma,\rho}^{\lambda}c_{\sigma^t,\nu}^{\mu}\mathcal{H}(\mathcal{K}\star
Q_{\rho}Q_{\nu}^*;-q,a)\\\nonumber
&=\sum_{\sigma,\rho,\nu}(-1)^{|\sigma|}c_{\sigma,\rho}^{\lambda}c_{\sigma^t,\nu}^{\mu}(-1)^{|\rho|+|\nu|}\mathcal{H}(\mathcal{K}\star
Q_{\rho}Q_{\nu}^*;q,a)\\\nonumber
&=(-1)^{|\lambda|+|\mu|}\sum_{\sigma,\rho,\nu}(-1)^{|\sigma|}c_{\sigma,\rho}^{\lambda}c_{\sigma^t,\nu}^{\mu}\mathcal{H}(\mathcal{K}\star
Q_{\rho}Q_{\nu}^*;q,a)\\\nonumber
&=(-1)^{|\lambda|+|\mu|}\mathcal{H}(\mathcal{K}\star
Q_{\lambda,\mu};q,a),
\end{align}
where the third ``=" is from the observation that
$c_{\sigma,\rho}^{\lambda}=0$ if $|\lambda|\neq |\sigma|+|\rho|$.
Therefore, by the definition of the full colored HOMFLYPT invariants
$W_{[\lambda,\mu]}(\mathcal{L};q,a)$ in formula (3.14), we obtain
\begin{align}
W_{[\lambda,\mu]}(\mathcal{K},-q,a)=(-1)^{|\lambda|+|\mu|}W_{[\lambda,\mu]}(\mathcal{K},q,a),
\end{align}
since $\kappa_{\lambda}$ is even for any partition $\lambda$.

Similarly, by formula (4.8), we have
\begin{align}
\mathcal{H}(\mathcal{K}\star
P_{\tau}P_{\delta}^*;q^{-1},a)=(-1)^{l(\tau)+l(\delta)}\mathcal{H}(\mathcal{K}\star
P_{\tau}P_{\delta}^*;q,a).
\end{align}
By using the identities,
\begin{align}
\chi_{\rho^t}(\tau)=(-1)^{|\rho|-l(\tau)}\chi_{\rho}(\tau), \
\chi_{\nu^t}(\delta)=(-1)^{|\nu|-l(\delta)}\chi_{\rho}(\delta),
\end{align}
we have
\begin{align}
\mathcal{H}(\mathcal{K}\star
Q_{\rho}Q_{\nu}^*;q^{-1},a)=(-1)^{|\rho|+|\nu|}\mathcal{H}(\mathcal{K}\star
Q_{\rho^t}Q_{\nu^t}^*;q,a).
\end{align}
By the formula (3.5) for $c_{\sigma,\rho}^{\lambda}$, it is easy to
get
\begin{align}
c_{\sigma,\rho}^{\lambda}=c_{\sigma^t,\rho^t}^{\lambda^t}, \
c_{\sigma^t,\nu}^{\mu}=c_{\sigma,\nu^t}^{\mu^t}.
\end{align}
Therefore,
\begin{align}
\mathcal{H}(\mathcal{K}\star
Q_{\lambda,\mu};q^{-1},a)=(-1)^{|\lambda|+|\mu|}\mathcal{H}(\mathcal{K}\star
Q_{\lambda,\mu};q,a).
\end{align}
i.e.
\begin{align}
W_{[\lambda,\mu]}(\mathcal{K},q^{-1},a)=(-1)^{|\lambda|+|\mu|}W_{[\lambda^t,\mu^t]}(\mathcal{K},q,a),
\end{align}
since $\kappa_\lambda=-\kappa_{\lambda^t}$ and
$\kappa_{\mu}=-\kappa_{\mu^t}$.

Finally, in order to show the last identity (4.7),  by using  the
Lemma 4.3 proved in \cite{CLPZ}, we have (see the formula (4.22) in
\cite{CLPZ}):
\begin{align}
\mathcal{H}(\mathcal{K}\star
P_{\tau}P_{\delta}^*;q,-a)=(-1)^{(|\tau|+|\delta|)(w(\mathcal{K})-1)}\mathcal{H}(\mathcal{K}\star
P_{\tau}P_{\delta}^*;q,a).
\end{align}

Then it is direct to show
\begin{align}
\mathcal{H}(\mathcal{K}\star
Q_{[\lambda,\mu]};q,-a)=(-1)^{(|\lambda|+|\mu|)(w(\mathcal{K})-1)}\mathcal{H}(\mathcal{K}\star
Q_{[\lambda,\mu]};q,a),
\end{align}
i.e.
\begin{align}
W_{[\lambda,\mu]}(\mathcal{K},q,-a)=(-1)^{|\lambda|+|\mu|}W_{[\lambda,\mu]}(\mathcal{K},q,a).
\end{align}

\end{proof}

\section{Proof of the strong integrality}
Let us first consider the case of a knot $\mathcal{K}$. Recall the
definition of the reduced full colored HOMFLYPT invariant
\begin{align}
P_{[\lambda,\mu]}(\mathcal{K};q,a)=\frac{W_{[\lambda,\mu]}(\mathcal{K};q,a)}{W_{[\lambda,\mu]}(U;q,a)}.
\end{align}
The main result in \cite{Morton} (see Theorem 1 in \cite{Morton})
can be written in the following form:
\begin{proposition}[Morton]
For any knot $\mathcal{K}$, $P_{[\lambda,\mu]}(\mathcal{K};q,a)$ is
a 2-variable integer Laurent polynomial, i.e.
$P_{[\lambda,\mu]}(\mathcal{K};q,a)\in \mathbb{Z}[q^{\pm 1},a^{\pm
1}]$.
\end{proposition}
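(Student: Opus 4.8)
The plan is to deduce the statement from Morton's structural analysis of relative HOMFLY skein modules, and then to check that the normalizations match up. First I would present the knot $\mathcal{K}$ as the closure $\widehat{T}$ of a framed $1$--$1$ tangle $T$ sitting in a rectangle $B$, arranged so that the annular closure operation (joining the two endpoints of $B$ around the core of the annulus) carries the decorated pattern ``$T$ coloured by $Q_{\lambda,\mu}$'' to ``$\mathcal{K}$ coloured by $Q_{\lambda,\mu}$''. Fixing $Q_{\lambda,\mu}\in\mathcal{C}_{d,t}$, decorating $T$ by $Q_{\lambda,\mu}$ produces an element $T\star Q_{\lambda,\mu}$ of the relative skein $\mathcal{S}(B)$ of the rectangle carrying, at each of its two ends, a copy of the coloured boundary pattern cut out by $Q_{\lambda,\mu}$ (equivalently, with the Hadji--Morton idempotent of \cite{HM} inserted at top and bottom).

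The key input I would then quote is Morton's theorem (Theorem~1 of \cite{Morton}): this relative skein module is a \emph{free module of rank one} over the \emph{genuine} ring $\mathbb{Z}[q^{\pm1},a^{\pm1}]$ --- with no $\{k\}=q^{k}-q^{-k}$ admitted in the denominators --- generated by the trivial (``straight'') coloured tangle $\id_{\lambda,\mu}$. Granting this, $T\star Q_{\lambda,\mu}=f_{\mathcal{K}}(q,a)\,\id_{\lambda,\mu}$ for a unique $f_{\mathcal{K}}\in\mathbb{Z}[q^{\pm1},a^{\pm1}]$. Applying the annular closure to both sides and then the framed HOMFLYPT bracket gives $\mathcal{H}(\mathcal{K}\star Q_{\lambda,\mu};q,a)=f_{\mathcal{K}}(q,a)\,\mathcal{H}(U\star Q_{\lambda,\mu};q,a)$, since the closure of $\id_{\lambda,\mu}$ is precisely $U\star Q_{\lambda,\mu}$. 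Dividing the defining formula (3.14) for $W_{[\lambda,\mu]}(\mathcal{K};q,a)$ by the one for $W_{[\lambda,\mu]}(U;q,a)$, the framing prefactors $q^{-(\kappa_{\lambda}+\kappa_{\mu})w}a^{-(|\lambda|+|\mu|)w}$ are unit monomials and cancel, so $P_{[\lambda,\mu]}(\mathcal{K};q,a)$ equals $f_{\mathcal{K}}(q,a)$ up to a unit and hence lies in $\mathbb{Z}[q^{\pm1},a^{\pm1}]$, which is the claim.

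I expect the rank-one freeness in the second step to be the whole difficulty. A priori $Q_{\lambda,\mu}$, expanded through (3.11)--(3.12) in the power sums $P_{\rho}P_{\nu}^{*}$, carries factors $\{k\}$ in its denominators (ultimately because $h_m=\frac{1}{\alpha_m}\hat a_m$ with $\alpha_m=q^{m(m-1)/2}\prod_{i=1}^{m}[i]$ a product of quantum integers), so one only gets that $\mathcal{H}(\mathcal{K}\star Q_{\lambda,\mu};q,a)$ and $\mathcal{H}(U\star Q_{\lambda,\mu};q,a)$ lie in the ring $\Lambda$ \emph{with} the $\{k\}$ as admitted denominators, and a Schur-type argument only produces a rank-one statement after inverting those. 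The substantive content of \cite{Morton} --- which I would invoke rather than reprove --- is exactly that closing up a $1$--$1$ tangle eliminates all such denominators; once that is granted, the remaining steps above are only the bookkeeping of the normalization in Definition~3.1.
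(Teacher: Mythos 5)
Your argument is essentially the paper's: Proposition 5.1 is not reproved there but simply quoted as Theorem~1 of \cite{Morton}, which yields $\mathcal{H}(\mathcal{K}\star Q_{\lambda,\mu};q,a)=f_{\mathcal{K}}(q,a)\,\mathcal{H}(U\star Q_{\lambda,\mu};q,a)$ with $f_{\mathcal{K}}\in\mathbb{Z}[q^{\pm1},a^{\pm1}]$, and your remaining bookkeeping with the unit framing prefactors is correct. One caveat on your packaging of the input: the relative skein of the rectangle with $Q_{\lambda,\mu}$-coloured boundary is free of rank one only over the ring $\Lambda$ with the $q^{k}-q^{-k}$ admitted as denominators, not over $\mathbb{Z}[q^{\pm1},a^{\pm1}]$ (e.g.\ $\id_{\lambda,\mu}$ together with a split unknot equals $\frac{a-a^{-1}}{q-q^{-1}}\,\id_{\lambda,\mu}$); the integrality of the coefficient is special to elements arising from a connected $1$--$1$ tangle whose closure is a knot, and that statement --- not the rank-one freeness over the integral ring --- is the precise content of Morton's theorem that you correctly invoke as a black box.
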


By the formulas (4.6) and (4.7) in Theorem 4.2, we immediately
obtain:
\begin{align}
P_{[\lambda,\mu]}(\mathcal{K};-q,a)=P_{[\lambda,\mu]}(\mathcal{K};q,a),\
P_{[\lambda,\mu]}(\mathcal{K};q,-a)=P_{[\lambda,\mu]}(\mathcal{K};q,a).
\end{align}
Therefore, $P_{[\lambda,\mu]}(\mathcal{K};q,a)$ satisfies the
following strong integrality:
\begin{theorem}
For any knot $\mathcal{K}$, $P_{[\lambda,\mu]}(\mathcal{K};q,a)\in
\mathbb{Z}[q^{\pm 2},a^{\pm 2}]$.
\end{theorem}
In fact, Proposition 5.1 can be generalized slightly to the case of
any link as follow. For a link $\mathcal{L}$ with $L$ components
$\mathcal{K}_1,...,\mathcal{K}_L$, if we only decorate the
$\alpha$-th component $\mathcal{K}_\alpha$ with
$Q_{\lambda^\alpha,\mu^\alpha}$, then its full colored HOMFLYPY
invariant is denoted by
$W_{[\lambda^\alpha,\mu^\alpha]}(\mathcal{L};q,a)$.
 Let
\begin{align}
P_{[\lambda^\alpha,\mu^\alpha]}(\mathcal{L};q,a)=\frac{W_{[\lambda^\alpha,\mu^\alpha]}(\mathcal{L};q,a)}{W_{[\lambda^\alpha,\mu^\alpha]}(U;q,a)},
\end{align}
we also have
\begin{proposition}
$P_{[\lambda^\alpha,\mu^\alpha]}(\mathcal{L};q,a)\in
\mathbb{Z}[q^{\pm 1},a^{\pm 1}]$.
\end{proposition}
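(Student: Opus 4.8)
The plan is to derive the statement from Morton's integrality theorem (Proposition~5.1) by re‑examining its proof and observing that it is really a statement about \emph{decorated $(1,1)$‑tangles}, not merely about knots: the cancellation of denominators in Morton's argument is local to the decorated strand, so an arbitrary $(1,1)$‑tangle that happens to carry extra closed components works just as well as a knot $(1,1)$‑tangle. If one takes Morton's $1$‑tangle formulation at face value the statement is almost immediate; otherwise one reopens his proof with a general $(1,1)$‑tangle in place of a knot.

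Concretely, first I would cut the distinguished component $\mathcal{K}_\alpha$ open at a single point. This presents $\mathcal{L}$ as the closure $\widehat{T}$ of a $(1,1)$‑tangle $T$, i.e.\ a tangle in a ball with one incoming and one outgoing endpoint on the boundary, whose single through‑strand closes up to $\mathcal{K}_\alpha$ while the remaining components $\mathcal{K}_\beta$, $\beta\neq\alpha$, sit inside $T$ as closed loops, linked arbitrarily with the through‑strand and among themselves. In this picture, decorating only $\mathcal{K}_\alpha$ by $Q_{\lambda^\alpha,\mu^\alpha}$ amounts to replacing the through‑strand of $T$ by the ``opened'' cable $\widetilde{Q}_{\lambda^\alpha,\mu^\alpha}$, the element of the HOMFLY skein of a rectangle with both orientations of strands present (coming from the $\mathcal{C}_+$ and $\mathcal{C}_-$ parts of $Q_{\lambda^\alpha,\mu^\alpha}$ in formula (3.11)) obtained by cutting the annular diagram along the core direction, and then taking the closure; up to the framing monomial this computes $W_{[\lambda^\alpha,\mu^\alpha]}(\mathcal{L};q,a)$. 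Morton's proof of Proposition~5.1 pairs exactly this opened cable with the rest of the $(1,1)$‑tangle inside the $(1,1)$‑skein of the annulus, and the denominators $\{hl(x)\}$ occurring in $W_{[\lambda^\alpha,\mu^\alpha]}(U;q,a)=\mathcal{H}(U\star Q_{\lambda^\alpha,\mu^\alpha};q,a)$ (up to framing) are produced entirely by the idempotent normalising $\widetilde{Q}_{\lambda^\alpha,\mu^\alpha}$ together with the turn‑back and meridian relations along the through‑strand; the cancellation he exhibits is strand‑local. Everything else in $T$ --- in particular the closed components $\mathcal{K}_\beta$ and all their linking --- contributes, to each monomial in the $\{h_m,h_k^{*}\}$‑expansion of $\widetilde{Q}_{\lambda^\alpha,\mu^\alpha}$, only an inert coefficient in $\Lambda$ that is never touched by Morton's cancellation. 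Hence $\mathcal{H}(\mathcal{L}\star Q_{\lambda^\alpha,\mu^\alpha};q,a)/\mathcal{H}(U\star Q_{\lambda^\alpha,\mu^\alpha};q,a)\in\mathbb{Z}[q^{\pm1},a^{\pm1}]$, and since the framing correction is a monomial in $q,a$ this gives $P_{[\lambda^\alpha,\mu^\alpha]}(\mathcal{L};q,a)\in\mathbb{Z}[q^{\pm1},a^{\pm1}]$.

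The step I expect to be the main obstacle is precisely the claim that Morton's cancellation is genuinely local to the through‑strand, i.e.\ that it nowhere uses that closing that strand yields the unknot. Two points deserve care: first, that the opened cable $\widetilde{Q}_{\lambda^\alpha,\mu^\alpha}$ still satisfies the quasi‑idempotency and meridian‑eigenvalue identities that drive Morton's estimate even when $\mu^\alpha\neq\emptyset$, which is covered by the Hadji--Morton description in (3.11)--(3.13) together with Morton's own treatment of the mixed diagrams $Q_{\lambda,\mu}$; and second, that the extra closed components introduce no denominators beyond those already allowed --- but $\mathcal{H}$ of any link lies in $\Lambda$, whose admitted denominators are only the $\{k\}=q^{k}-q^{-k}$, and Morton's mechanism removes exactly the $\{hl(x)\}$ coming from the normalisation by $\mathcal{H}(U\star Q_{\lambda^\alpha,\mu^\alpha};q,a)$, regardless of what the $\mathcal{K}_\beta$ look like. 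Making these precise is a matter of unwinding Morton's proof with $T$ a general $(1,1)$‑tangle rather than a knot $(1,1)$‑tangle; nothing else in the argument changes.
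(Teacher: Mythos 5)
Your proposal is correct and follows essentially the same route as the paper: cut $\mathcal{K}_\alpha$ open to present $\mathcal{L}$ as the closure of a $1$-tangle containing the other components as closed curves, and observe that Morton's integrality argument for the decorated through-strand never uses that this tangle closes to an unknot or carries no extra components. The paper packages the same idea slightly more crisply via the induced linear map $T_{\mathcal{K}_\alpha}^{\mathcal{L}}:\mathcal{C}\to\mathcal{C}$, which commutes with the meridian map so that $Q_{\lambda,\mu}$ is an eigenvector and the normalized invariant is exactly the eigenvalue, to which Morton's integrality applies verbatim.
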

\begin{proof}
A slight modification for the proof of Proposition 5.1 presented in
\cite{Morton}( see page 333-334) will give the proof of Proposition
5.3. In fact, given a link $\mathcal{L}$ with $L$ components
$\mathcal{K}_1,...,\mathcal{K}_L$, we denote it by
$\mathcal{L}=\mathcal{K}_1\bigvee \mathcal{K}_2\bigvee \cdots
\bigvee \mathcal{K}_L$. We cut the component $\mathcal{K}_\alpha$
open and get a 1-tangle, so we can draw $\mathcal{L}$ in the annulus
as the closure of this 1-tangle. Decorating $\mathcal{K}_\alpha$
with a diagram $Q_{\alpha}$ gives a diagram $\mathcal{K}_1\bigvee
\cdots\bigvee \mathcal{K}_\alpha \star Q_{\alpha}\bigvee \cdots
\bigvee \mathcal{K}_L$ in $\mathcal{C}$, it induces a linear map
$T_{\mathcal{K}_\alpha}^{\mathcal{L}}: \mathcal{C}\rightarrow
\mathcal{C}$. Refer to the page 332 in \cite{Morton} for this
construction in the case of $\mathcal{K}$. If $Q_\alpha$ is an
eigenvector of $T_{\mathcal{K}_\alpha}^{\mathcal{L}}$ with
eigenvalue $a_{\mathcal{K}_\alpha}^{\mathcal{L}}$. Then
$\mathcal{K}_1\bigvee \cdots\bigvee \mathcal{K}_\alpha \star
Q_{\alpha}\bigvee \cdots \bigvee
\mathcal{K}_L=T_{\mathcal{K}_\alpha}^{\mathcal{L}}(Q_\alpha)=a_{\mathcal{K}_\alpha}^{\mathcal{L}}Q_\alpha=a_{\mathcal{K}_\alpha}^{\mathcal{L}}U\star
Q_\alpha$ implies that
\begin{align}
a_{\mathcal{K}_\alpha}^{\mathcal{L}}=\frac{\mathcal{H}(\mathcal{K}_1\bigvee
\cdots\bigvee \mathcal{K}_\alpha \star Q_{\alpha}\bigvee \cdots
\bigvee \mathcal{K}_L;q,a)}{\mathcal{H}(U\star Q_\alpha;q,a)}.
\end{align}
The eigenvectors of $T_{\mathcal{K}_\alpha}^{\mathcal{L}}$ are given
by $Q_{\lambda,\mu}$ since the map
$T_{\mathcal{K}_\alpha}^{\mathcal{L}}$ commutes with the meridian
map $\phi$, see \cite{HM}. We denote the eigenvalue of
$T_{\mathcal{K}_\alpha}^{\mathcal{L}}$ corresponding to eigenvector
by $a_{\mathcal{K}_\alpha}^{\mathcal{L}}(\lambda,\mu)$, the proof of
Theorem 1 in \cite{Morton}(see page 334) shows that
\begin{align}
a_{\mathcal{K}_\alpha}^{\mathcal{L}}(\lambda,\mu)\in
\mathbb{Z}[q^{\pm 1},a^{\pm 1}].
\end{align}
\end{proof}
Moreover, Proposition 5.3 immediately implies that
\begin{proposition}
For any link $\mathcal{L}$, the full colored HOMFLYPT invariants
$W_{[\lambda^1,\mu^1],...,[\lambda^L,\mu^L]}(\mathcal{L};q,a)$
contains a factor $W_{[\lambda^\alpha,\mu^\alpha]}(U;q,a)$ where
$\alpha$ can be chosen to be $1,...,L$.
\end{proposition}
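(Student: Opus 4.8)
The plan is to derive the statement directly from the eigenvector construction used in the proof of Proposition 5.3, applied after decorating every component of $\mathcal{L}$ other than $\mathcal{K}_\alpha$.

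Fix $\alpha\in\{1,\dots,L\}$ and write $\mathcal{L}=\mathcal{K}_1\bigvee\cdots\bigvee\mathcal{K}_L$. First I would decorate each $\mathcal{K}_\beta$ with $\beta\neq\alpha$ by $Q_{\lambda^\beta,\mu^\beta}$; this replaces $\mathcal{L}$ by a skein element $\widetilde{\mathcal{L}}$ whose decorated parallel strands sit in a region disjoint from the remaining component $\mathcal{K}_\alpha$. Cutting $\mathcal{K}_\alpha$ open exactly as on p.~332 of \cite{Morton} and in the proof of Proposition 5.3, one obtains a $\Lambda$-linear operator $T=T^{\widetilde{\mathcal{L}}}_{\mathcal{K}_\alpha}\colon\mathcal{C}\to\mathcal{C}$ recording the effect of decorating $\mathcal{K}_\alpha$ by an arbitrary element of $\mathcal{C}$. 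As in that argument, $T$ commutes with the meridian map $\phi$ of \cite{HM} — the closed components of the ambient, decorated or not, lie away from the arc where the new decoration and the meridian are inserted — so $\{Q_{\lambda,\mu}\}$ is again a basis of eigenvectors of $T$. Let $e=e(\lambda^\alpha,\mu^\alpha)$ denote the eigenvalue of $T$ on $Q_{\lambda^\alpha,\mu^\alpha}$. Closing up the annulus then gives
\begin{align}
\mathcal{H}(\mathcal{L}\star\otimes_{\gamma=1}^{L}Q_{\lambda^\gamma,\mu^\gamma};q,a)=e\cdot\mathcal{H}(U\star Q_{\lambda^\alpha,\mu^\alpha};q,a),
\end{align}
and, since the $0$-framed unknot has writhe $0$, $\mathcal{H}(U\star Q_{\lambda^\alpha,\mu^\alpha};q,a)=W_{[\lambda^\alpha,\mu^\alpha]}(U;q,a)$. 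Multiplying through by the framing normalization of Definition 3.1 and using that each $\kappa_{\lambda^\gamma},\kappa_{\mu^\gamma}$ is even, this rearranges to $W_{[\lambda^1,\mu^1],\dots,[\lambda^L,\mu^L]}(\mathcal{L};q,a)=C\cdot W_{[\lambda^\alpha,\mu^\alpha]}(U;q,a)$, where $C$ is $e$ times a monomial in $q^{\pm1},a^{\pm1}$. Once one knows that $C\in\mathbb{Z}[q^{\pm1},a^{\pm1}]$, the asserted factorization follows, and since $\alpha$ was arbitrary it holds for each $\alpha=1,\dots,L$.

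The step I expect to be the main obstacle is precisely the integrality $e\in\mathbb{Z}[q^{\pm1},a^{\pm1}]$ (equivalently $C\in\mathbb{Z}[q^{\pm1},a^{\pm1}]$). This is not formal: both $W_{[\lambda^1,\mu^1],\dots,[\lambda^L,\mu^L]}(\mathcal{L};q,a)$ and $W_{[\lambda^\alpha,\mu^\alpha]}(U;q,a)$ generically carry $(q^k-q^{-k})$-denominators, so the disappearance of all such denominators from the quotient $e$ is a genuine cancellation and cannot be read off from the fact that $T$ is merely a $\Lambda$-linear operator. My plan is to reproduce Morton's argument from the proof of Proposition 5.3 (see \cite[pp.~333--334]{Morton}): it controls the eigenvalues of an annulus-insertion operator by repeatedly applying the HOMFLY skein relations to reduce the decoration to elements such as the $X_m$ of (3.7), whose satellites are honest links, and it uses only the tangle-insertion structure of $T$, never the hypothesis that the fixed ambient is an honest knot. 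Running it with the ambient knot replaced by $\widetilde{\mathcal{L}}$ should give $e(\lambda^\alpha,\mu^\alpha)\in\mathbb{Z}[q^{\pm1},a^{\pm1}]$ essentially verbatim; the only real bookkeeping is to carry the extra decorated components of $\widetilde{\mathcal{L}}$ inertly through Morton's induction.
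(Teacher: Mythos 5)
Your first two steps are sound and are essentially the paper's own route: Proposition 5.4 is presented as an immediate consequence of Proposition 5.3, and your observation makes that precise. Decorating the components $\mathcal{K}_\beta$, $\beta\neq\alpha$, replaces the ambient by a $\Lambda$-linear combination of honest diagrams lying away from $\mathcal{K}_\alpha$, so the insertion operator $T^{\widetilde{\mathcal{L}}}_{\mathcal{K}_\alpha}$ is a $\Lambda$-linear combination of operators of the type treated in Proposition 5.3; it still commutes with the meridian map, $Q_{\lambda^\alpha,\mu^\alpha}$ is an eigenvector, and closing up gives $W_{[\lambda^1,\mu^1],\dots,[\lambda^L,\mu^L]}(\mathcal{L};q,a)=C\cdot W_{[\lambda^\alpha,\mu^\alpha]}(U;q,a)$ with $C\in\Lambda$. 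That is all the proposition asserts, and it is exactly how the factor $\mathcal{H}(U\star Q_{[\lambda^\alpha,\mu^\alpha]};q,a)$ is exhibited in (5.13), where the cofactor is $\sum_{\tau^\beta,\delta^\beta}C_{\tau^1,\delta^1,\dots}\prod_{\beta\neq\alpha}\{\tau^\beta\}^{-1}\{\delta^\beta\}^{-1}a^{\mathcal{L}_{\tau^1,\delta^1,\dots}}_{\mathcal{K}_\alpha}(\lambda,\mu)$.

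The step you single out as ``the main obstacle,'' namely $e\in\mathbb{Z}[q^{\pm1},a^{\pm1}]$, is not an obstacle to be overcome: it is false, and Morton's argument cannot be run ``essentially verbatim'' to prove it. Take $\mathcal{L}$ to be the $0$-framed $2$-component unlink with $\lambda^1=\lambda^2=(1)$ and $\mu^1=\mu^2=\emptyset$. Then $W_{(1),(1)}(\mathcal{L};q,a)=\bigl(\tfrac{a-a^{-1}}{q-q^{-1}}\bigr)^2$ and $W_{(1)}(U;q,a)=\tfrac{a-a^{-1}}{q-q^{-1}}$, so $e=\tfrac{a-a^{-1}}{q-q^{-1}}\notin\mathbb{Z}[q^{\pm1},a^{\pm1}]$. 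This is precisely why the paper warns (just before (1.6)) that $Q^{[\lambda^\alpha,\mu^\alpha]}_{[\lambda^1,\mu^1],\dots,[\lambda^L,\mu^L]}$ still carries $(q^k-q^{-k})$-denominators, and why the normalization factor $\prod_{x}\{hl(x)\}$ over the components $\beta\neq\alpha$ is inserted in (1.6)/(5.6) before any integrality can hold (Theorem 5.5). The reason Morton's proof does not transfer to $\widetilde{\mathcal{L}}$ is that it needs the ambient of the $1$-tangle to be an honest link (or a sum of honest links, as after expanding the decorations into the $X_{\tau}X^*_{\delta}$); the change of basis from $Q_{\lambda^\beta,\mu^\beta}$ to that integral basis goes through $P_m=X_m/[m]$ and introduces the denominators $\{\tau^\beta\}\{\delta^\beta\}$, which genuinely survive in $e$. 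So your proof of the proposition is complete after your second step; the third step should be deleted rather than attempted, and the denominator bookkeeping it was aiming at belongs to Theorem 5.5, not to Proposition 5.4.
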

This phenomenon has been showed in \cite{NRZ} after doing many
detailed calculations for colored HOMFLYPT invariants with symmetric
representations by Chern-Simons theory.

Now, we recall the definition of normalized
$[\lambda^\alpha,\mu^\alpha]$-reduced full colored HOMFLYPT
invariant in Section 1.
\begin{align}
P_{[\lambda^1,\mu^1],...,[\lambda^L,\mu^L]}^{[\lambda^\alpha,\mu^\alpha]}(\mathcal{L};q,a)&=\prod_{\beta=1,\beta\neq
\alpha}^{L}\prod_{\substack{\rho^\beta,\nu^\beta\\|\lambda^\beta|-|\rho^\beta|=
|\mu^\beta|-|\nu^\beta|}}\prod_{x\in \rho^\beta,\nu^\beta}(a\cdot
q^{cn(x)})(q^{hl(x)}-q^{-hl(x)})\\\nonumber &\cdot
Q_{[\lambda^1,\mu^1],...,[\lambda^L,\mu^L]}^{[\mathcal{\lambda}^\alpha,\mathcal{\mu}^\alpha]}(\mathcal{L};q,a).
\end{align}
where
\begin{align}
Q_{[\lambda^1,\mu^1],...,[\lambda^L,\mu^L]}^{[\mathcal{\lambda}^\alpha,\mathcal{\mu}^\alpha]}(\mathcal{L};q,a)
=\frac{W_{[\lambda^1,\mu_1],...,[\lambda^L,\mu^L]}(\mathcal{L};q,a)}
{W_{[\lambda^\alpha,\mu^\alpha]}(U;q,a)} \ \ \text{for}\ \
\alpha=1,...,L.
\end{align}
We have the following strong integrality:
\begin{theorem}
For $\alpha=1,...,L$, we have
\begin{align}
P_{[\lambda^1,\mu^1],...,[\lambda^L,\mu^L]}^{[\lambda^\alpha,\mu^\alpha]}(\mathcal{L};q,a)\in
\mathbb{Z}[q^{\pm 2}, a^{\pm 2}].
\end{align}
\end{theorem}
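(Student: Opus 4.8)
The plan is to reduce Theorem 5.6 to the combination of Proposition 5.3 (with its generalization to $Q$-basis eigenvalues), the factorization of $W_{[\lambda^\alpha,\mu^\alpha]}(U;q,a)$ as a product over boxes, and the polynomiality from Morton's integrality theorem, all tied together by the symmetry statements of Theorem 4.2. First I would write
\[
W_{[\lambda^1,\mu^1],\dots,[\lambda^L,\mu^L]}(\mathcal{L};q,a)
= \prod_{\beta\neq\alpha} W_{[\lambda^\beta,\mu^\beta]}(U;q,a)\;\cdot\; R(\mathcal{L};q,a),
\]
where by Proposition 5.3 (applied successively to each component $\beta\neq\alpha$, using that the $Q_{\lambda,\mu}$ are simultaneous eigenvectors of every $T^{\mathcal{L}}_{\mathcal{K}_\beta}$, so the factors $W_{[\lambda^\beta,\mu^\beta]}(U)$ split off one at a time) the quotient $R$ still lies in $\mathbb{Z}[q^{\pm1},a^{\pm1}]$ after we further divide by $W_{[\lambda^\alpha,\mu^\alpha]}(U;q,a)$; in fact $Q^{[\lambda^\alpha,\mu^\alpha]}_{[\lambda^1,\mu^1],\dots}(\mathcal{L};q,a)$ equals this $R$ times $\prod_{\beta\neq\alpha}W_{[\lambda^\beta,\mu^\beta]}(U;q,a)$, and Morton's Proposition 5.1 together with Proposition 5.3 shows that $R\cdot\prod_{\beta\ne\alpha}W_{[\lambda^\beta,\mu^\beta]}(U;q,a)$ lies in $\mathbb{Z}[q^{\pm1},a^{\pm1}]$. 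So the only obstruction to $Q^{[\lambda^\alpha,\mu^\alpha]}$ being a Laurent polynomial is the denominator contributed by the $W_{[\lambda^\beta,\mu^\beta]}(U;q,a)$, $\beta\ne\alpha$.

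Next I would recall the explicit evaluation of the unknot invariant. From the HOMFLY skein description of $Q_{\lambda,\mu}$ and the hook-content type formula (this is the standard $\mathcal{H}(U\star Q_{\lambda,\mu})$ computation; see \cite{HM,GT}), one has, up to a monomial in $q,a$,
\[
W_{[\lambda^\beta,\mu^\beta]}(U;q,a) \;=\; \pm\!\!\sum_{\substack{\rho^\beta,\nu^\beta\\ |\lambda^\beta|-|\rho^\beta| = |\mu^\beta|-|\nu^\beta|}}(\text{coeff})\!\!\prod_{x\in\rho^\beta,\nu^\beta}\frac{a q^{cn(x)}-a^{-1}q^{-cn(x)}}{q^{hl(x)}-q^{-hl(x)}},
\]
so that the product $\prod_{\beta\ne\alpha}\prod_{\rho^\beta,\nu^\beta}\prod_{x}(a q^{cn(x)})(q^{hl(x)}-q^{-hl(x)})$ appearing in the definition of $P^{[\lambda^\alpha,\mu^\alpha]}$ is designed precisely to clear every $(q^{hl(x)}-q^{-hl(x)})$ occurring in the denominator of $\prod_{\beta\ne\alpha}W_{[\lambda^\beta,\mu^\beta]}(U;q,a)$. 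I would make this matching explicit box-by-box: the factor $(aq^{cn(x)})(q^{hl(x)}-q^{-hl(x)})$ is in $\mathbb{Z}[q^{\pm1},a^{\pm1}]$, and multiplying $Q^{[\lambda^\alpha,\mu^\alpha]}$ by the full product kills all denominators, giving $P^{[\lambda^\alpha,\mu^\alpha]}_{[\lambda^1,\mu^1],\dots,[\lambda^L,\mu^L]}(\mathcal{L};q,a)\in\mathbb{Z}[q^{\pm1},a^{\pm1}]$. This is the technical heart: one must check that no \emph{new} denominator $q^k-q^{-k}$ is created and that nothing needs a denominator beyond the hook lengths — here I would invoke Proposition 5.3 again to guarantee the quotient $W_{[\lambda^1,\mu^1],\dots]}/W_{[\lambda^\alpha,\mu^\alpha]}(U)$ already has denominator only from the other unknot factors.

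Finally, to upgrade $\mathbb{Z}[q^{\pm1},a^{\pm1}]$ to $\mathbb{Z}[q^{\pm2},a^{\pm2}]$ I would use the symmetries. The $a\to-a$ symmetry: formula (4.7) gives $W_{[\lambda^\beta,\mu^\beta]}(\mathcal{L};q,-a)=(-1)^{\sum(|\lambda^\beta|+|\mu^\beta|)}W_{[\lambda^\beta,\mu^\beta]}(\mathcal{L};q,a)$ and likewise for each unknot factor, while $\prod_{x\in\rho^\beta,\nu^\beta}(aq^{cn(x)})$ picks up $(-1)^{\sum_{\beta\ne\alpha}(|\rho^\beta|+|\nu^\beta|)}$; tracking the constraint $|\lambda^\beta|-|\rho^\beta|=|\mu^\beta|-|\nu^\beta|$ and that $(q^{hl(x)}-q^{-hl(x)})$ is $a$-independent, all signs cancel, so $P^{[\lambda^\alpha,\mu^\alpha]}(\mathcal{L};q,-a)=P^{[\lambda^\alpha,\mu^\alpha]}(\mathcal{L};q,a)$, forcing only even powers of $a$. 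For the $q\to-q$ symmetry: (4.6) gives a sign $(-1)^{\sum(|\lambda^\beta|+|\mu^\beta|)}$ on $W$, the unknot factors contribute matching signs, $(aq^{cn(x)})\mapsto(-1)^{cn(x)}aq^{cn(x)}$ and $(q^{hl(x)}-q^{-hl(x)})\mapsto(-1)^{hl(x)}(q^{hl(x)}-q^{-hl(x)})$, and since for a box $x=(i,j)$ one has $cn(x)+hl(x)=\lambda_i+\lambda^t_j-2i+1$, summing over all boxes of $\rho^\beta$ (resp. $\nu^\beta$) the parities telescope to $|\rho^\beta|\pmod 2$ (resp. $|\nu^\beta|$), which again matches; hence $P^{[\lambda^\alpha,\mu^\alpha]}(\mathcal{L};-q,a)=P^{[\lambda^\alpha,\mu^\alpha]}(\mathcal{L};q,a)$, forcing only even powers of $q$. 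Combining, $P^{[\lambda^\alpha,\mu^\alpha]}_{[\lambda^1,\mu^1],\dots,[\lambda^L,\mu^L]}(\mathcal{L};q,a)\in\mathbb{Z}[q^{\pm2},a^{\pm2}]$. The main obstacle I anticipate is the bookkeeping in step two — verifying cleanly that the chosen normalization factor exactly matches the denominator of the product of unknot invariants, with no residual $q^k-q^{-k}$ and no over-clearing — since this rests on the precise form of $\mathcal{H}(U\star Q_{\lambda,\mu})$ and the correct handling of the Littlewood–Richardson sum in (3.11).
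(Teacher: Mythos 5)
Your final step (using the symmetries (4.6)--(4.7) and the parity identity $(-1)^{\sum_{x}(hl(x)+cn(x))}=(-1)^{|\lambda|}$ to upgrade $\mathbb{Z}[q^{\pm1},a^{\pm1}]$ to $\mathbb{Z}[q^{\pm2},a^{\pm2}]$) is essentially the paper's argument and is fine. The gap is in your first step. You claim that Proposition 5.3, ``applied successively to each component'', yields a factorization
$W_{[\lambda^1,\mu^1],\dots,[\lambda^L,\mu^L]}(\mathcal{L};q,a)=S\cdot\prod_{\beta=1}^{L}W_{[\lambda^\beta,\mu^\beta]}(U;q,a)$ with $S\in\mathbb{Z}[q^{\pm1},a^{\pm1}]$, so that the only denominators of $Q^{[\lambda^\alpha,\mu^\alpha]}_{[\lambda^1,\mu^1],\dots,[\lambda^L,\mu^L]}$ are the hook-length denominators of $\prod_{\beta\neq\alpha}W_{[\lambda^\beta,\mu^\beta]}(U;q,a)$. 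That is exactly the assertion that the fully reduced invariant $P_{[\lambda^1,\mu^1],\dots,[\lambda^L,\mu^L]}(\mathcal{L};q,a)$ of formula (1.4) is a Laurent polynomial, which the introduction explicitly states is false in general (the reduced Hopf link already fails). The iteration breaks because Morton's eigenvalue integrality needs the cut-open $1$-tangle to be a $\mathbb{Z}[q^{\pm1},a^{\pm1}]$-combination of honest diagrams; once the other components carry decorations $Q_{\lambda^\beta,\mu^\beta}$, whose coefficients already contain the denominators $q^{k}-q^{-k}$, the eigenvalue of $T^{\mathcal{L}}_{\mathcal{K}_\alpha}$ is only in $\Lambda$. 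So even the single quotient $Q^{[\lambda^\alpha,\mu^\alpha]}$ is not a Laurent polynomial, and the bookkeeping you flag as the ``technical heart'' in step two is clearing the wrong denominators.

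The missing idea, which is the actual heart of the paper's proof, is to expand the decorations on the components $\beta\neq\alpha$ in the power-sum basis: by (3.11)--(3.12) and (3.7), $Q_{\lambda^\beta,\mu^\beta}$ is a combination of the elements $X_{\tau}X^{*}_{\delta}$ with explicit denominators $\{\tau\}\{\delta\}$, and each $X_{\tau}X^{*}_{\delta}$ is an integral combination of genuine links. Morton's eigenvalue argument then applies term by term (Proposition 5.3 for the links $\mathcal{L}_{\tau^1,\delta^1,\dots}$), producing Laurent-polynomial eigenvalues times $\mathcal{H}(U\star Q_{\lambda^\alpha,\mu^\alpha})$, and the residual denominators $\prod 1/(\{\tau^\beta\}\{\delta^\beta\})$, weighted by $\chi_{\rho^\beta}(\tau^\beta)/z_{\tau^\beta}$, are controlled by comparing the two unknot evaluations (5.14) and (5.15); that comparison is precisely what shows the hook-length product in the definition of $P^{[\lambda^\alpha,\mu^\alpha]}$ suffices to clear them. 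Your proposal would go through if the first step were replaced by this expansion; as written, it rests on an unsupported and in fact false factorization.
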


\begin{proof}

By formula (3.11) and (3.12),
\begin{align}
Q_{\lambda,\mu}&=\sum_{\sigma,\rho,\nu}(-1)^{|\sigma|}c_{\sigma,\rho}^{\lambda}c_{\sigma^t,\nu}^{\mu}Q_{\rho}
Q_{\nu}^*\\\nonumber
&=\sum_{\sigma,\rho,\nu,\tau,\delta}(-1)^{|\sigma|}c_{\sigma,\rho}^{\lambda}c_{\sigma^t,\nu}^{\mu}\frac{\chi_{\rho}(\nu)}{z_\tau}
\frac{\chi_{\nu}(\delta)}{z_\delta}\frac{1}{\{\tau\}\{\delta\}}X_{\tau}X_{\delta}^*.
\end{align}
Therefore,
\begin{align}
\mathcal{\mathcal{L}}\star \otimes_{\beta=1}^{L}
Q_{\lambda^\alpha,\mu^\alpha}& =\mathcal{K}_1\star
Q_{\lambda^1,\mu^1}\bigvee \cdots\bigvee \mathcal{K}_\alpha \star
Q_{\lambda^\alpha,\mu^\alpha}\bigvee \cdots \bigvee
\mathcal{K}_L\star Q_{\lambda^L,\mu^L}\\\nonumber
&=\sum_{\tau^\beta,\delta^\beta, \beta\neq
\alpha}C_{\tau^1,\delta^1,..,\hat{\tau^\alpha},\hat{\delta^\alpha},..,\tau^L,\delta^L}
\prod_{\beta=1,\beta\neq
\alpha}\frac{1}{\{\tau^\beta\}\{\delta^\beta\}}\\\nonumber
&\cdot\mathcal{K}_1\star X_{\tau^1}X^*_{\delta^1}\bigvee
\cdots\bigvee \mathcal{K}_\alpha \star
Q_{[\lambda^\alpha,\mu^\alpha]}\bigvee \cdots \bigvee
\mathcal{K}_L\star X_{\tau^L}X^{*}_{\delta^L},
\end{align}
where
\begin{align}
C_{\tau^1,\delta^1,..,\hat{\tau^\alpha},\hat{\delta^\alpha},..,\tau^L,\delta^L}=
\sum_{\sigma^\beta,\rho^\beta,\nu^\beta,\tau^\tau,\delta^\tau,\beta\neq
\alpha}\prod_{\beta,\beta\neq
\alpha}(-1)^{|\sigma^\beta|}c_{\sigma^\beta,\rho^\beta}^{\lambda^\beta}c_{(\sigma^\beta)^t,\nu^\beta}^{\mu^\beta}
\frac{\chi_{\rho^\beta}(\nu^\beta)}{z_{\tau^\beta}}
\frac{\chi_{\nu^\beta}(\delta^\beta)}{z_{\delta^\beta}},
\end{align}
and $\hat{\tau^\alpha}, \hat{\delta^\alpha}$ denote the indexes
$\tau^\alpha, \delta^\alpha$ do not appear in the summation.

We denote the link
$\mathcal{L}_{\tau^1,\delta^1,..,\hat{\tau^\alpha},\hat{\delta^\alpha},..,\tau^L,\delta^L}=\mathcal{K}_1\star
X_{\tau^1}X^*_{\delta^1}\bigvee \cdots\bigvee \mathcal{K}_\alpha
\bigvee \cdots \bigvee \mathcal{K}_L\star
X_{\tau^L}X^{*}_{\delta^L}$. By formula (5.5),
\begin{align}
a_{\mathcal{K}_\alpha}^{
\mathcal{L}_{\tau^1,\delta^1,..,\hat{\tau^\alpha},\hat{\delta^\alpha},..,\tau^L,\delta^L}}(\lambda,\mu)\in
\mathbb{Z}[q^{\pm 1},a^{\pm 1}],
\end{align}
and
\begin{align}
\mathcal{H}(\mathcal{\mathcal{L}}\star \otimes_{\beta=1}^{L}
Q_{[\lambda^\alpha,\mu^\alpha]};q,a)&=\sum_{\tau^\beta,\delta^\beta,
\beta\neq
\alpha}C_{\tau^1,\delta^1,..,\hat{\tau^\alpha},\hat{\delta^\alpha},..,\tau^L,\delta^L}
\prod_{\beta=1,\beta\neq
\alpha}\frac{1}{\{\tau^\beta\}\{\delta^\beta\}}\\\nonumber &\cdot
a_{\mathcal{K}_\alpha}^{
\mathcal{L}_{\tau^1,\delta^1,..,\hat{\tau^\alpha},\hat{\delta^\alpha},..,\tau^L,\delta^L}}(\lambda,\mu)
\mathcal{H}(U\star Q_{[\lambda^\alpha,\mu^\alpha]};q,a).
\end{align}

Recall that we have two expressions for the colored HOMFLYPT
invariant for unknot $U$, the first one is given by (see
\cite{LP1}),
\begin{align}
\mathcal{H}(U\star
Q_{\lambda};q,a)=\sum_{\mu}\frac{\chi_{\lambda}(\mu)}{z_\mu}\prod_{i=1}^{l(\mu)}\frac{a^{\mu_i}-a^{-\mu_i}}{q^{\mu_i}-q^{-\mu_i}}.
\end{align}
Another one is given by (see Lemma 3.6.1 in page 51 in \cite{L2} ),
\begin{align}
\mathcal{H}(U\star Q_{\lambda};q,a)=\prod_{x\in
\lambda}\frac{a^{-1}q^{cn(x)}-aq^{-cn(x)}}{q^{h(x)}-q^{-h(x)}}.
\end{align}
Therefore, we have
\begin{align}
\prod_{x\in
\lambda}(q^{h(x)}-q^{-h(x)})\sum_{\mu}\frac{\chi_{\lambda}(\mu)}{z_\mu}\prod_{i=1}^{l(\mu)}\frac{1}{q^{\mu_i}-q^{-\mu_i}}\in
\mathbb{Z}[q^{\pm 1},a^{\pm 1}].
\end{align}
So for any $\beta\neq \alpha$,
\begin{align}
\prod_{x\in \rho^\beta,
\nu^\beta}\{hl(x)\}\sum_{\rho^\beta,\nu^\beta}\frac{\chi_{\rho^\beta}(\tau^\beta)}{z_{\tau^\beta}}
\frac{\chi_{\nu^\beta}(\delta^\beta)}{z_{\delta^\beta}}\frac{1}{\{\tau^\beta\}\{\delta^\beta\}}\in
\mathbb{Z}[q^{\pm 1},a^{\pm 1}].
\end{align}
Therefore, we have
\begin{align}
\prod_{\beta=1,\beta\neq
\alpha}^{L}\prod_{\substack{\rho^\beta,\nu^\beta\\|\lambda^\beta|-|\rho^\beta|=
|\mu^\beta|-|\nu^\beta|}}\prod_{x\in
\rho^\beta,\nu^\beta}\{hl(x)\}\sum_{\tau^\beta,\delta^\beta,
\beta\neq
\alpha}C_{\tau^1,\delta^1,..,\hat{\tau^\alpha},\hat{\delta^\alpha},..,\tau^L,\delta^L}
\prod_{\beta=1,\beta\neq
\alpha}\frac{1}{\{\tau^\beta\}\{\delta^\beta\}}\in \mathbb{Z}[q^{\pm
1},a^{\pm 1}].
\end{align}
\begin{align}
\prod_{\beta=1,\beta\neq
\alpha}^{L}\prod_{\substack{\rho^\beta,\nu^\beta\\|\lambda^\beta|-|\rho^\beta|=
|\mu^\beta|-|\nu^\beta|}}\prod_{x\in
\rho^\beta,\nu^\beta}\{hl(x)\}Q_{[\lambda^1,\mu^1],...,[\lambda^L,\mu^L]}^{[\mathcal{\lambda}^\alpha,\mathcal{\mu}^\alpha]}(\mathcal{L};q,a)\in
\mathbb{Z}[q^{\pm 1},a^{\pm 1}].
\end{align}
Hence,
\begin{align}
P_{[\lambda^1,\mu^1],...,[\lambda^L,\mu^L]}^{[\lambda^\alpha,\mu^\alpha]}(\mathcal{L};
q,a)\in \mathbb{Z}[q^{\pm 1},a^{\pm 1}].
\end{align}

On the other hand, by formulas (4.6) and (4.7), we have
\begin{align}
Q_{[\lambda^1,\mu^1],...,[\lambda^L,\mu^L]}^{[\mathcal{\lambda}^\alpha,\mathcal{\mu}^\alpha]}(\mathcal{L};\pm
q,\mp a)=(-1)^{\sum_{\beta\neq
\alpha}(|\lambda^\beta|+|\mu^\beta|)}Q_{[\lambda^1,\mu^1],...,[\lambda^L,\mu^L]}^{[\mathcal{\lambda}^\alpha,\mathcal{\mu}^\alpha]}(\mathcal{L};
q, a).
\end{align}
Combing the two expressions (5.14) and (5.15) for colored HOMFLYPT
invariant of the unknot $U$, for any partition $\lambda$, we obtain
the identity:
\begin{align}
(-1)^{|\lambda|}=(-1)^{\sum_{x\in \lambda}(hl(x)+cn(x))}.
\end{align}
Thus
\begin{align}
P_{[\lambda^1,\mu^1],...,[\lambda^L,\mu^L]}^{[\lambda^\alpha,\mu^\alpha]}(\mathcal{L};\pm
q,\mp
a)=P_{[\lambda^1,\mu^1],...,[\lambda^L,\mu^L]}^{[\lambda^\alpha,\mu^\alpha]}(\mathcal{L};q,a).
\end{align}
Combing formulas (5.20) and (5.23), we obtain
\begin{align}
P_{[\lambda^1,\mu^1],...,[\lambda^L,\mu^L]}^{[\lambda^\alpha,\mu^\alpha]}(\mathcal{L};
q, a)\in \mathbb{Z}[q^{\pm 2},a^{\pm 2}].
\end{align}
\end{proof}


\begin{thebibliography}{a}

\bibitem{Ai} A. K. Aiston, {\em Adams operators and knot decorations},
arXiv: 9711015.

\bibitem{AM} A. K. Aiston and H. R. Morton, {\em Idempotents of Hecke algebras of type A}, J. Knot Theory Ramif. 7 (1998), 463-487.

\bibitem{An} H. Andersen, {\em Quantum groups at roots of $\pm 1$}, Comm.
Algebra 24 (1996), 3269-3282.


\bibitem{CLPZ} Q. Chen, K. Liu, P. Peng and S. Zhu, {\em Colored HOMFLY-PT invariants:
integrality, symmetries and congruent skein relations},
submitted.(original arXiv: 1402.3571)



\bibitem{GNSSS} S. Gukov, S. Nawata, I. Saberi, M. Stosic and P. Sulkowski, {\em
Sequencing BPS Spectra}, arXiv:1512.07883.



\bibitem{GT}D. Gross and W. Taylor, {\em Two-dimensional QCD is a string
theory}. Nucl. Phys. B 400, 181 (1993).


\bibitem{HM} R. J. Hadji and H. R. Morton, {\em A basis for the full Homfly
skein of the annulus}, Math. Proc. Cambridge Philos.  Soc.  141
(2006), no. 1, 81-100.


\bibitem{Le} T. Le, {\em Integrality and symmetry of quantum link invariants}. Duke
Math. J. vol. 102 (2000), 273-306.

\bibitem{L} S. G. Lukac, {\em Idempotents of the Hecke algebra become Schur functions in the
skein of the annulus}, Math. Proc. Camb. Phil. Soc 138 (2005),
79-96.

\bibitem{L2} S. G. Lukac, {\em Homfly skeins and the Hopf link}. PhD. thesis,
University of Liverpool, 2001.

\bibitem{Lu} G. Lusztig, {\em Introduction to quantum groups}, Progr. Math. 110, Birkhaser,
Boston, 1993.

\bibitem{LP1} K. Liu and P. Peng,{\em Proof of the
Labastida-Marino-Ooguri-Vafa conjecture}. J. Differential Geom.,85
(3):479-525, 2010.

\bibitem{LP2} K. Liu and P. Peng, {\em New structures of knot invariants},
Commun. Number Theory Phys. 5 (2011), 601-615.


\bibitem{Mac} I. G. MacDolnald, {\em Symmetric functions and Hall
polynomials}, 2nd edition, Charendon Press, 1995.

\bibitem{Mor} H. Morton, {\em Power sums and Homfly skein theory}, In Geometry and
Topology Monogrpahs, vol. 4: Invariant of 3-manifolds (2002),
235-244.

\bibitem{Morton} H. R. Morton, {\em Integrality of HOMFLY 1-tangle invariants}. Algebraic \& Geometric Topology 7(2007), 327-338.



\bibitem{NRZ} S. Nawata, P. Ramadevi and Zodinmawia, {\em Colored HOMFLY
polynomials from Chern-Simons theory}, arXiv:1302.5144.


\bibitem{RT} N. Reshetikhin and V. Turaev, {\em Ribbon graphs and their
invariants derived from quantum groups}. Comm. Math. Phys.,
127(1):1-26, 1990.

\bibitem{Turaev2} V. G. Turaev, {\em The Conway and Kauffman modules of a solid torus}.
Zap. Nauchn. Sem. Leningrad. Otdel. Mat. Inst. Steklov. (LOMI) 167
(1988), Issled. Topol. 6, 79-89.

\bibitem{TVW} D. Tubbenhauer, P. Vaz and P. Wedrich, {\em Super q-Howe Duality and
Web Categories}, arxiv: 1504.05069.


\bibitem{Zhu} S. Zhu, \emph{Colored HOMFLY polynomials via skein theory}, J.
High. Energy. Phys. 10(2013), 229

\end{thebibliography}
\end{document}